\tikzset{
	vertex/.style={fill, circle, inner sep = 1.2pt},
	leg/.style={draw, circle, inner sep = 1.2pt},
	component/.style={draw, minimum width = 7mm, minimum height = 7mm},
	empty triangle/.style = {regular polygon, regular polygon sides = 3, inner sep = -1pt, minimum width = 9.5mm},
	triangle/.style = {empty triangle, draw}
}
\newtheorem{theorem}{Theorem}[section]
\newtheorem{lemma}[theorem]{Lemma}
\newtheorem{observation}[theorem]{Observation}
\newcommand{\N}{\mathbb{N}}
\newcommand{\fA}{\mathcal{A}}
\newcommand{\fO}{\mathcal{O}}
\newcommand{\fX}{\mathcal{X}}
\newcommand{\pCleanup}{\texttt{cleanup}}
\newcommand{\td}{\mathrm{td}}
\newcommand{\cost}{\mathrm{cost}}
\newcommand{\depth}{\mathrm{depth}}
\newcommand{\alt}{\mathrm{alt}}
\newcommand{\BST}{\mathrm{bst}}
\newcommand{\OPT}{\mathrm{OPT}}
\newcommand{\OPTST}{\text{OPT-ST}}
\newcommand{\restr}[1]{|_{#1}}
\title{The diameter of caterpillar associahedra}
\author{Benjamin Aram Berendsohn\thanks{Institut f\"ur Informatik, Freie Universit\"at Berlin, \texttt{beab@zedat.fu-berlin.de}. Work supported by DFG grant KO 6140/1-1.}}
\begin{document}
	
	\maketitle
	
	\begin{abstract}
		The caterpillar associahedron $\fA(G)$ is a polytope arising from the rotation graph of search trees on a caterpillar tree $G$, generalizing the rotation graph of binary search trees (BSTs) and thus the conventional associahedron. We show that the diameter of $\fA(G)$ is $\Theta(n + m \cdot (H+1))$, where $n$ is the number of vertices, $m$ is the number of leaves, and $H$ is the entropy of the leaf distribution of $G$.
		
		Our proofs reveal a strong connection between caterpillar associahedra and searching in BSTs. We prove the lower bound using Wilber's first lower bound for dynamic BSTs, and the upper bound by reducing the problem to searching in static BSTs.
	\end{abstract}
	
	\section{Introduction}
	
	Associahedra are polytopes with importance in many areas of combinatorics. Among other things, the skeleton of the $(n-1)$-dimensional associahedron $\fA_n$ represents the \emph{rotation graph} of binary search trees (BSTs) on $n$ keys. More precisely, each vertex of $\fA_n$ represents a BST, and each edge represents a rotation (definitions of BSTs and rotations can be found in standard textbooks). The diameter of $\fA_n$ is known to be precisely $2n-6$ when $n > 10$~\cite{SleatorEtAl1988,Pournin2014}; that is, any BST can be transformed into any other BST with at most $2n-6$ rotations, and this is tight.
	
	In this paper, we study a generalization of associahedra called \emph{graph associahedra}. Graph associahedra were originally defined using \emph{tubings}~\cite{CarrDevadoss2004}. We use an equivalent definition based on \emph{search trees on graphs} (STGs).
	While the keyspace of a BST is a linearly ordered set, the key space of an STG is a graph. Formally, given a connected graph $G = (V,E)$, an STG on $G$ is a rooted tree that can be constructed as follows. Choose a vertex $r \in V$ as the root. Then, recursively create STGs on the connected components of $G \setminus v$, and add them to the STG as children of $r$. Rotations on STGs can be defined similarly as for BSTs (more details in \cref{sec:prelims}). Search trees on graphs have been used in various contexts under different names (see, e.g., \cite[Section 2.2]{CardinalEtAl2018b}).
	
	Given a connected graph $G$ on $n$ vertices, Carr and Devadoss~\cite{CarrDevadoss2004} defined the graph associahedron $\fA(G)$ as an $(n-1)$-dimensional polytope such that the skeleton of $\fA(G)$ is isomorphic to the rotation graph of the STGs on $G$. Since STGs on the path with $n$ vertices correspond to BSTs on $n$ nodes, we obtain the conventional associahedron when $G$ is a path.

	For search trees $T_1$ and $T_2$ on a graph $G$, let the \emph{rotation distance} $d(T_1, T_2)$ be the minimum number of rotations required to transform $T_1$ into $T_2$. The diameter $\delta(\fA(G))$ of $\fA(G)$ is the maximum rotation distance between two search trees on $G$.
	Manneville and Pilaud~\cite{MannevillePilaud2015} showed that $\max\{2n-18, m\} \le \delta(\fA(G)) \le \binom{n}{2}$ for each connected graph $G$ on $n$ vertices and $m$ edges. Moreover, the diameter of graph associahedra is monotone under the addition of edges. Both bounds are asymptotically tight. For example, conventional associahedra ($G$ is a path) and cyclohedra ($G$ is a cycle) have linear diameter, and permutohedra ($G$ a the complete graph) have diameter $\binom{n}{2}$.
	
	In this paper, we consider the case where $G$ is a \emph{caterpillar tree}. A caterpillar tree (or simply \emph{caterpillar}) is a tree consisting of a path and some number of leaves that are adjacent to the path.
	The choice of that path is not unique\footnote{Either the ends of the path are leaves, or there is a leaf attached to an end of the path which could be considered part of the path.}, but we assume that any considered caterpillar consists of a distinguished path called the \emph{spine} and any number of leaves, called \emph{legs}. Our results are not significantly affected by the choice of the spine.
	
	We determine the diameter of every caterpillar associahedron up to a constant factor. This involves the Shannon entropy of the ``leg distribution'', which we now properly define. Let $G$ be a caterpillar tree with $n$ spine vertices $s_1, s_2, \dots, s_n$, let $s_i$ be adjacent to $m_i$ leg vertices, and let $m = m_1 + m_2 + \dots + m_n$ be the total number of leg vertices. Then
	\begin{align*}
		H(G) = H(m_1, m_2, \dots, m_n) = \sum_{i \in [n], m_i > 0} \frac{m_i}{m} \log\left(\frac{m}{m_i}\right).
	\end{align*}
	
	For simplicity of presentation, we write $H'(\cdot) = H(\cdot) + 1$. We are now ready to state our main result.
	\begin{restatable}{theorem}{restateCatLB}\label{p:main}
		Let $G$ be a caterpillar tree with $n$ spine vertices and $m$ leg vertices. Then $\delta(\fA(G)) \in \Theta( n + m \cdot H'(G) )$.
	\end{restatable}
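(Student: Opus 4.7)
My plan is to prove the two bounds separately.

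For the upper bound, I would fix a canonical search tree $T^\star$ on $G$ and show that any STG $T$ can be rotated into $T^\star$ in $O(n + m \cdot H'(G))$ rotations; the overall bound then follows by the triangle inequality. For the canonical form I would use an approximately optimal weighted binary search tree $B$ on the spine vertices with weights $(m_1, \ldots, m_n)$: a standard entropy-balanced construction realizes each $s_i$ at depth $O(\log(m/m_i) + 1)$ in $B$. The canonical STG $T^\star$ then uses $B$ as its spine skeleton and hangs the legs of $s_i$ in a simple local pattern (e.g., a chain directly below $s_i$); hence every vertex of $G$ sits at depth $O(\log(m/m_i) + 1)$ in $T^\star$. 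The core technical step is to show that from any starting STG $T$ we can reach $T^\star$ using a number of rotations bounded by $\sum_{v \in V(G)} \depth_{T^\star}(v) + O(n) = O(n + m \cdot H'(G))$. The ``searching in a static BST'' framing promised in the abstract then becomes natural: bringing each leg of $s_i$ to its canonical position uses rotations in $T$ whose count mirrors the search path for $s_i$ in the weighted BST $B$.

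For the lower bound, the $\Omega(n + m)$ term is immediate from the Manneville--Pilaud bound $\delta(\fA(G)) \ge 2|V(G)| - 18$ cited in the introduction. The harder piece is $\Omega(m \cdot H(G))$. Here I would exhibit a specific pair $T_0, T_1$ of STGs on $G$ and relate every rotation sequence between them to a BST algorithm servicing a carefully chosen access sequence $X$ on the spine vertices. The sequence $X$ would access each $s_i$ exactly $\Theta(m_i)$ times, in an interleaving pattern designed so that Wilber's first (interleave) bound, evaluated against a balanced reference tree on $s_1, \ldots, s_n$, is $\Omega\bigl(\sum_i m_i \log(m/m_i)\bigr) = \Omega(m \cdot H)$. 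The reduction must guarantee that every rotation in any STG sequence from $T_0$ to $T_1$ can be charged to at least a constant amount of work for the decoded BST algorithm on $X$, so that Wilber's lower bound descends to a rotation lower bound for $d(T_0, T_1)$.

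I expect the main obstacle to lie in making both reductions tight. On the lower-bound side, the chief challenge is designing $T_0$ and $T_1$ so that rearranging the legs really forces the decoded BST algorithm to access the spine vertices in the intended interleaving pattern; without this, Wilber's bound does not apply to the sequence that is actually simulated. On the upper-bound side, the telescoping ``process $T^\star$ top-down'' strategy must be executed in a carefully chosen order so that rotations bringing later vertices of $T^\star$ into place do not undo previously placed legs, and the per-leg rotation count must be amortized against $\depth_{T^\star}(s_i)$ in order to convert the depth-sum over $T^\star$ into the entropy expression $m \cdot H'(G)$.
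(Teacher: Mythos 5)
Your high-level plan points in the same direction as the paper: an entropy-balanced static BST on the spine for the upper bound, Wilber's first lower bound for the lower bound, and the insight that leg nodes act as queries to the spine BST. But both halves leave genuine gaps that need distinct ideas the proposal does not supply.

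On the upper bound, you flag the ordering issue but do not resolve it, and a direct ``place vertices according to $T^\star$'' strategy is not obviously within budget: rotations between two spine nodes can only be applied along \emph{light} edges of $\BST(T)$ (edges with no leg node between the endpoints in $T$), and the starting $T$ may have legs and spine nodes badly interleaved. The paper's resolution (\cref{p:upper-reduction}) is to first \emph{lift all leg nodes to the top}, reaching a canonical form $A(S,\pi)$ in $2m+\fO(n)$ rotations, via an interleaved ``cleanup'' step that keeps leg nodes away from the top two levels of the spine BST so that the spine can be freely restructured near its root. Only then does one rotate the spine into the near-optimal static BST $S'$ with $\fO(n)$ light-edge rotations and push each leg down from the bottom of the chain, so the pushing cost is exactly $\sum_i m_i \cdot \depth_{S'}(s_i) \le 2H'(G)\cdot m$ by \cref{p:opt-st-entropy}. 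Your ``sum of depths in $T^\star$'' bookkeeping also risks an extra $\sum_i \depth(s_i)$ term that the paper avoids by keeping spine and leg costs separate.

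On the lower bound, the plan to \emph{decode} a dynamic BST algorithm from an arbitrary STG rotation sequence and then apply Wilber's bound to the decoded algorithm is not what the paper does, and it is unclear it can be made to work: an arbitrary rotation sequence has no natural pointer discipline, so the decoding is not obviously well-defined, and the paper explicitly notes that the STG rotation count lies \emph{between} $\Lambda(S,\sigma)$ and $\OPT(S,\sigma)$, so one cannot conclude a lower bound by comparing to a simulated BST execution. Instead, \cref{p:wilber-stg} proves Wilber's bound \emph{directly for the rotation distance} between $A(S,\pi)$ and $B(S)$: it partitions $V(G)$ into the root's cluster and the two spine-subtree halves, applies the projection lemma (\cref{p:proj}) to handle the parts recursively, and controls crossing rotations via a potential argument on the ``alternation number'' of root-to-node paths. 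You would also need \cref{p:wilber-static-opt}, where $S$ is a Mehlhorn-style entropy-balanced tree constructed jointly with the access sequence $\sigma$; a generic balanced reference tree is not guaranteed to certify $\Lambda'(S,\sigma)\ge \frac12 mH$ for skewed leg distributions.
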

	
	Notably, if $m = n$ and each spine node is adjacent to one leaf node, than $\delta(\fA(G)) \in \Omega(n \log n)$.
	
	Our proofs make use of techniques from the design of \emph{optimal BSTs}. A connection between rotations in BSTs and rotations in search trees on caterpillars is not surprising -- caterpillars are similar to paths, after all. However, we show a connection to \emph{queries} to BSTs. Essentially, the leg nodes in search trees on caterpillars can be seen as queries to the BST on the spine nodes. For our upper bound (\cref{sec:upper}), we use the fact that an optimal \emph{static} BST for an input distribution $X$ has amortized query cost $H'(X)$~\cite{Mehlhorn1975}. For our lower bound (\cref{sec:lower}), we use \emph{Wilber's first lower bound} \cite{Wilber1989}, which bounds the performance of \emph{dynamic} BSTs on a certain input sequence. We show that it also bounds the rotation distance between certain search trees on a caterpillar. Finally, we show that Wilber's first lower bound is asymptotically equal to $H'(X)$ if the input distribution $X$ is fixed, but the order of queries is worst possible. Note that this also implies that dynamic BSTs cannot beat optimal static BST on any distribution if the ordering is worst possible. Kujala and Elomaa~\cite{KujalaElomaa2008} previously showed that this is true even if the ordering is random, but they did not use Wilber's bound.
	
	\paragraph{Related work.}
	Improved bounds on $\delta(\fA(G))$ are known if $G$ belongs to certain graph classes. Pournin~\cite{Pournin2014a} showed that $\delta(\fA(G)) \approx 2.5n$ if $G$ is the cycle on $n$ vertices. Cardinal, Langerman and Pérez-Lantero showed that $\delta(\fA(G)) \in \fO(n \log n)$ if $G$ is a tree on $n$ vertices, and this bound is tight if $G$ has the form of a balanced binary tree.
	
	Recently, Cardinal, Pournin, and Valencia-Pabon~\cite{CardinalEtAl2021} showed that $\delta(\fA(G)) \in \fO( \td(G) \cdot n )$, where $\td(G)$ is the \emph{treedepth}\footnote{The treedepth $\td(G)$ can be defined as the minimum height of a search tree on $G$.} of $G$, and that this bound is attained by \emph{trivially perfect graphs}. Using the relationship between treedepth and \emph{treewidth},
	this extends the $\fO(n \log n)$ upper bound to graphs with bounded treewidth. They also showed that this bound is is tight for graphs of \emph{pathwidth two} (which have treewidth at most two, but are not necessarily trees). For the definitions of treewidth and pathwidth, we refer to \cite{CardinalEtAl2021}. Our \cref{p:main} shows that the $\fO(n \log n)$ bound is tight already for caterpillars, which are both trees and have pathwidth \emph{one} (in fact, caterpillars are precisely the graphs of pathwidth one).
	
	We do not consider \emph{queries} to STGs in this paper. In the case where $G$ is a tree, some results from BSTs have been carried over. Bose, Cardinal, Iacono, Koumoutsos, and Langerman~\cite{BoseEtAl2019} presented a $\fO(\log \log n)$-competitive search tree algorithm based on \emph{tango trees} for BSTs~\cite{DemaineEtAl2007}. Berendsohn and Kozma~\cite{BerendsohnKozma2022} described a variant of Splay trees \cite{SleatorTarjan1985}, and a polynomial-time approximation scheme for the optimal static search tree on a given tree for a given input distribution. Notably, it is still unknown whether an optimal static search tree on a tree can be found in polynomial time.
	
	Berendsohn and Kozma~\cite{BerendsohnKozma2022} also showed that if we only consider a subset of search trees on a tree $G$ called $k$-cut trees, then the maximum rotation distance between two STGs is linear. A special case of $k$-cut trees are \emph{Steiner-closed trees}, which play a central role in the results of \cite{BoseEtAl2019} and \cite{BerendsohnKozma2022}.
	
	\section{Preliminaries}\label{sec:prelims}
	
	In this paper, we consider (simple and undirected) graphs on the one hand, and (rooted) search trees on the other. We call the vertices of search trees \emph{nodes}. In both cases, we denote by $V(\cdot)$ the set of vertices or nodes and by $E(\cdot)$ the set of edges.
	
	Let $G$ be a graph. We denote the subgraph of $G$ induced by $U \subseteq V(G)$ by $G[U]$. For $v \in V(G)$, we write $G \setminus v = G[V(G) \setminus\{v\}]$.
	
	Let $T$ be a rooted tree and $x \in V(T)$. For a node $x$, $T_x$ denotes the subtree of $T$ consisting of $x$ and all its descendants. The \emph{depth} of $x$ is the number of nodes in the path from the root of $T$ to $x$, and is denoted by $\depth_T(x)$.
	
	\paragraph{Queries in binary search trees.}
	
	In the \emph{dynamic BST model}, we are given a starting BST $S$ on $[n]$ and a sequence $\sigma$ of access queries. Each access query specifies a node $i \in [n]$. We start each query with a pointer at the root, and are required to move the pointer to the node $i$ to satisfy the query. To this end, we are allowed to move the pointer to the the parent or a child of the node it is currently pointing at, or execute a rotation involving that node. Let $\OPT(S, \sigma)$ denote the minimum number of pointer moves and rotation needed to serve $\sigma$. We charge a pointer move at the start of each query, when the pointer is moved to the root, so each query has cost at least one.
	
	Since the rotation distance between two BSTs is $\fO(n)$, we can always replace the starting BST $S$ by a different one at the cost of $\fO(n)$. If the access sequence is long enough, this cost is insignificant; therefore, let us define $\OPT(\sigma) = \min_S \OPT(S, \sigma)$. For each BST $S$, we have $\OPT(S, \sigma) \le \OPT(\sigma) + \fO(n)$.
	
	It is not known how to compute or approximate $\OPT(S, \sigma)$ or the associated sequence of operations efficiently. However, a number of algorithms have been conjectured to be \emph{instance-optimal}, i.e, to serve every access sequence $\sigma$ with a cost of $\fO(\OPT(\sigma) + n)$, most notably \emph{Splay}~\cite{SleatorTarjan1985} and \emph{Greedy}~\cite{Lucas1988,Munro2000}. We emphasize that Splay is an \emph{online} algorithm, i.e., it serves each query independently from future queries, and that Greedy can be made online~\cite{DemaineEtAl2009} with only a constant-factor overhead. It is currently unknown whether any online algorithm can approximate the offline optimum $\OPT(\sigma)$ by a constant factor; this is the subject of the \emph{dynamic optimality conjecture}.
	
	There are several lower bounds known for $\OPT(\sigma)$. In this paper, we use \emph{Wilber's first lower bound}~\cite{Wilber1989}, which we define and discuss in \cref{sec:wilber_bst}.
	
	If we do not allow rotations, then we can only move the pointer down until we hit the queried node. Thus, the minimum cost of serving $\sigma = (x_1, x_2, \dots, x_m)$ in $S$ is clearly $\sum_{i=1}^m \depth_S(x_i)$. A BST $S$ minimizing this quantity is called an \emph{optimal static BST}. Note that only the frequencies of the elements in $\sigma$ affect the static cost, not the order. For a BST $S$ on $[n]$ and element frequencies $m_1, m_2, \dots, m_n \in \N_0$, define $$\cost(S, m_1, m_2, \dots, m_n) = \sum_{i=1}^n m_i \cdot \depth_S(i).$$ Let $\OPTST(m_1, m_2, \dots, m_n)$ be the minimum $\cost(S, m_1, m_2, \dots, m_n)$ over all possible BSTs $S$.
	
	It is possible to compute an optimal static BST in $\fO(n^2)$ time~\cite{Knuth1971}. Mehlhorn showed that $\frac{1}{m}\OPTST(m_1, m_2, \dots, m_n)$ is within a factor two from the Shannon entropy.
	
	\begin{lemma}[{\cite{Mehlhorn1975}}]\label{p:opt-st-entropy}
		Let $X = (m_1, m_2, \dots, m_n)$ be a sequence of nonnegative integers, and let $m = m_1 + m_2 + \dots + m_n$. Then
		\begin{align*}
			\frac{1}{2}H(X) \cdot m \le \OPTST(X) \le (2H(X) + 2) \cdot m = 2H'(X) \cdot m.
		\end{align*}
	\end{lemma}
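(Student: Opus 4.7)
\textbf{Proof plan for \cref{p:opt-st-entropy}.}
For the lower bound, I would argue information-theoretically. For any BST $S$ on $[n]$, a search for key $i$ records a sequence of comparison outcomes in $\{<, =, >\}$: at each internal node we record $<$ or $>$ if we descend accordingly, and $=$ exactly when we reach $i$. This sequence has length $\depth_S(i)$ and uniquely identifies $i$; since every sequence ends with $=$, none is a prefix of another, so the codewords form a prefix code over a ternary alphabet. Applying Shannon's noiseless source coding theorem to the distribution $p_i = m_i/m$,
\begin{align*}
\frac{\cost(S, m_1,\ldots,m_n)}{m} \;=\; \sum_{i=1}^n \frac{m_i}{m}\,\depth_S(i) \;\ge\; \frac{H(X)}{\log 3} \;\ge\; \frac{H(X)}{2},
\end{align*}
and taking the minimum over $S$ yields $\OPTST(X) \ge \tfrac{1}{2} H(X) \cdot m$.

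For the upper bound, I would build a specific BST by the weighted bisection heuristic: on a contiguous key range of total weight $W$, pick as root an index $k$ such that both subranges have weight at most $W/2$, then recurse. Writing $f(k) = \sum_{i<k} m_i$, the condition is equivalent to $W/2 - m_k \le f(k) \le W/2$. Such a $k$ must exist: $f$ is non-decreasing with $f(k+1) - f(k) = m_k$, so letting $k^*$ be the smallest index with $f(k^*) \ge W/2 - m_{k^*}$, either $k^* = 1$ (hence $m_1 \ge W/2$ and $f(1) = 0 \le W/2$) or $f(k^*) = f(k^*-1) + m_{k^*-1} < W/2 - m_{k^*-1} + m_{k^*-1} = W/2$. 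Thus every subtree at depth $d$ has weight at most $m/2^{d-1}$, and since key $i$ lies in its depth-$\depth(i)$ subtree of weight $\ge m_i$, we obtain $\depth(i) \le \log(m/m_i) + 1$. Summing,
\begin{align*}
\cost(S, m_1,\ldots,m_n) \;\le\; \sum_{i:\, m_i > 0} m_i\bigl(\log(m/m_i) + 1\bigr) \;=\; H(X)\cdot m + m \;\le\; (2H(X) + 2)\cdot m.
\end{align*}

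The lower bound is essentially immediate once the ternary-code structure of BST searches is identified. The main obstacle is the upper bound, specifically the existence of a balancing root $k$ for the bisection: the short case analysis on $k^*$ is essential because one must handle the situation where a single element $m_k$ accounts for more than half the total weight $W$, in which case that element itself must be made the root.
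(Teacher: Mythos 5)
The paper does not prove this lemma; it cites Mehlhorn~\cite{Mehlhorn1975} and uses the result as a black box. Your self-contained proof is correct and is essentially the standard argument: the lower bound via the Kraft inequality for the ternary $\{<,=,>\}$ comparison codewords (one checks $\sum_i 3^{-\depth_S(i)} \le 1$ by induction on the tree, whence $\sum_i p_i \depth_S(i) \ge H(X)/\log 3 \ge H(X)/2$), and the upper bound via Mehlhorn's weighted-bisection construction, where the case analysis on $k^*$ correctly handles the possibility that a single weight dominates. Your bounds $\OPTST(X) \ge m\,H(X)/\log 3$ and $\OPTST(X) \le m(H(X)+1)$ are in fact sharper than the constants stated in the lemma, so the claim follows a fortiori. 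Two small presentational points: the lower-bound step is more transparently the Kraft--McMillan inequality plus Gibbs's inequality than ``Shannon's noiseless source coding theorem,'' and you should note explicitly that $k^*$ exists because the last index of the range always satisfies $f(k) \ge W/2 - m_k$; but neither affects correctness.
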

	
	\paragraph{Search trees on graphs.}
	
	Let $G$ be a connected graph, and $T$ be a rooted tree, such that $V(T) = V(G)$. Let $r$ be root of $T$ and let $c_1, c_2, \dots, c_k$ be the children of $r$ in $T$. Then $T$ is a \emph{search tree on $G$} if
	\begin{enumerate}[(a)]
		\itemsep0pt
		\item $G \setminus r$ consists of precisely $k$ connected components $C_1, C_2, \dots, C_k$ such that $V(T_{c_i}) = V(C_i)$ for each $i \in [k]$; and
		\item $T_{c_i}$ is a search tree on $C_i$ for each $i \in [k]$.
	\end{enumerate}
	
	Let $T$ be a search tree on $G$, let $p \in V(G)$, let $c$ be a child of $p$ in $T$, and let $g$ be the parent of $p$ in $T$, if $p$ is not the root. A \emph{rotation} of the edge $(p,c)$ makes $c$ the parent of $p$ and child of $g$ (or root), and accordingly redistributes children so that the result is still an STG. More precisely, it (1) makes $c$ a child of $g$, if $p$ is not the root, and otherwise, makes $c$ the root; (2) makes $p$ a child of $c$; and (3) makes each child $x$ of $c$ a child of $p$ where $V(T_c)$ contains both a vertex adjacent to $p$ and a vertex adjacent to $c$. See \cref{fig:rotation} for an illustration. It can be checked that the rooted tree resulting from a rotation is indeed an STG. It is also easy to see that each STG on $G$ can be rotated into every other STG on $G$, e.g., by rotating the correct element to the root and then recursing on the subtrees.
	
	\begin{figure}
		\centering
		\begin{tikzpicture}[xscale=1, yscale=0.75]
			\small
			\node[component] (A1) at (0,0) {$A_j$};
			\node[] (ARem) at (0,1+0.1) {$\vdots$};
			\node[component] (A2) at (0,2) {$A_1$};
			\node[vertex] (p) at (1,1) {};
			\node[above=1mm] at (p) {$p$};
			\node[component] (B1) at (2,0) {$B_k$};
			\node[] (BRem) at (2,1+0.1) {$\vdots$};
			\node[component] (B2) at (2,2) {$B_1$};
			\node[vertex] (c) at (3,1) {};
			\node[above=1mm] at (c) {$c$};
			\node[component] (C1) at (4,0) {$C_\ell$};
			\node[] (CRem) at (4,1+0.1) {$\vdots$};
			\node[component] (C2) at (4,2) {$C_1$};
			
			\draw[] (A1) -- (p) -- (B1) -- (c) -- (C1);
			\draw[] (A2) -- (p) -- (B2) -- (c) -- (C2);
		\end{tikzpicture}
		\hspace{5mm}
		\begin{tikzpicture}[sibling distance = 9mm, level distance = 13mm, child anchor = north]
			\footnotesize
			\node[vertex] (p) {}
				child{ node[triangle] (A1) {\strut$A_1$} }
				child{ node[triangle] (A2) {\strut$A_j$} }
				child{ node[vertex] (c) {}
					child{ node[triangle] (B1) {\strut$B_1$} }
					child{ node[triangle] (B2) {\strut$B_k$} }
					child{ node[triangle] (C1) {\strut$C_1$} }
					child{ node[triangle] (C2) {\strut$C_\ell$} }
				};
			\draw (p) -- ($(p)+(0,0.3)$);
			\node[right=0.5mm] at (p) {$p$};
			\node[above right] at (c) {$c$};
			\node at ($(A1)!0.5!(A2)+(0,0.2)$) {$\dots$};
			\node at ($(B1)!0.5!(B2)+(0,0.2)$) {$\dots$};
			\node at ($(C1)!0.5!(C2)+(0,0.2)$) {$\dots$};
		\end{tikzpicture}
		\hspace{5mm}
		\begin{tikzpicture}[sibling distance = 9mm, level distance = 13mm, child anchor = north]
			\footnotesize
			\node[vertex] (c) {}
				child{ node[vertex] (p) {}
					child{ node[triangle] (A1) {\strut$A_1$} }
					child{ node[triangle] (A2) {\strut$A_j$} }
					child{ node[triangle] (B1) {\strut$B_1$} }
					child{ node[triangle] (B2) {\strut$B_k$} }
				}
				child{ node[triangle] (C1) {\strut$C_1$} }
				child{ node[triangle] (C2) {\strut$C_\ell$} };
			\draw (c) -- ($(c)+(0,0.3)$);
			\node[left=0.5mm] at (c) {$c$};
			\node[above left] at (p) {$p$};
			\node at ($(A1)!0.5!(A2)+(0,0.2)$) {$\dots$};
			\node at ($(B1)!0.5!(B2)+(0,0.2)$) {$\dots$};
			\node at ($(C1)!0.5!(C2)+(0,0.2)$) {$\dots$};
		\end{tikzpicture}
		\caption{An STG rotation. \emph{(left)}~A graph $G$ with two vertices $p$ and $c$ that split $G$ into $j + k + \ell$ components. Each line represents one or more edges. \emph{(center)}~The subtree $T_p$ in an STG $T$ on $G$. \emph{(right)}~The result of the rotation $(p,c)$ in $T$.}\label{fig:rotation}
	\end{figure}
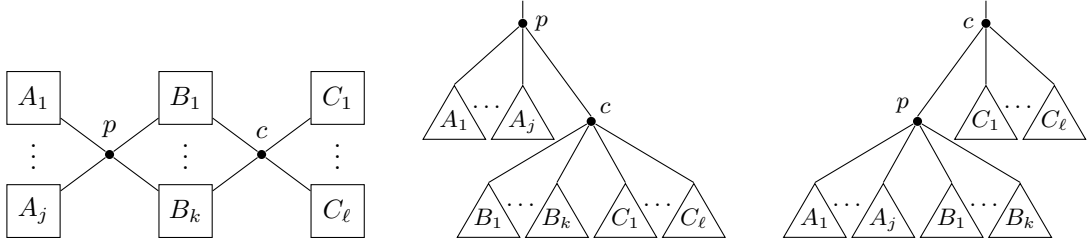
	
	\paragraph{Projections of STGs.} Both of our proofs make use of a concept defined by Cardinal, Langerman, and Pérez-Lantero \cite{CardinalEtAl2018b}.
	Let $T$ be a search tree on a graph $G$, and let $v$ be a leaf vertex in $G$. Note that $v$ has at most one child in $T$.
	We define $T \setminus v$ to be the following search tree on $G \setminus v$, obtained by \emph{pruning} $v$: If $v$ has no children, simply remove it. If $v$ has a parent $p$ and a child $c$, remove $v$ and make $c$ a child of $p$. If $v$ is the root of $T$ and has a child $c$, then remove $v$ and make $c$ the root.
	
	If $G$ is a tree, then we can obtain every subgraph of $G$ by progressively removing leaves. Accordingly, if $T$ is a search tree on a tree $G$, and $U \subseteq V(G)$ is a set of vertices such that $G[U]$ is connected, then we can define the \emph{projection} of $T$ onto $U$, written $T[U]$, as the search tree obtained by progressively pruning the vertices in $V(G) \setminus U$. It is easy to see that the order of pruning does not matter.
	
	The main utility of projections lies in the following lemma, which essentially states that projections onto $U$ are only affected by rotations between nodes in $U$.
	\begin{lemma}[{\cite{CardinalEtAl2018b}}]\label{p:proj}
		Let $T$ be a search tree on a tree $G$, let $U \subseteq V(G)$ such that $G[U]$ is connected, let $(x, y)$ be an edge of $T$, and let $T'$ be the tree obtained by rotating $(x,y)$. If $x, y \in U$, then $T'[U]$ is the STG obtained when rotating $(x,y)$ in $T[U]$. Otherwise, $T'[U] = T[U]$.
	\end{lemma}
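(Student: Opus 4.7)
The plan is to pass through a single bookkeeping object for each STG, namely the subtree-set function $u \mapsto S_T(u) := V(T_u)$. First I would verify three elementary facts: (i) an STG on a connected graph is uniquely determined by its subtree-set function, since the root is the unique vertex whose subtree is all of $V(G)$ and the structure below is determined inductively by the fact that children correspond to connected components; (ii) pruning a single leaf $v$ of $G$ acts as $S_{T \setminus v}(u) = S_T(u) \setminus \{v\}$ for all $u \ne v$, independent of whether $v$ is a leaf or an internal node of $T$; (iii) by iterating (ii) for each leaf removal (taken in any valid order) one obtains $S_{T[U]}(u) = S_T(u) \cap U$ for every $u \in U$. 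Finally, reading off the rotation picture in \cref{fig:rotation} gives that $S_{T'}(u) = S_T(u)$ for every $u \notin \{x, y\}$, together with explicit expressions for $S_{T'}(x)$ and $S_{T'}(y)$ in terms of $S_T(x), S_T(y)$.

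For Case 2, assume without loss of generality $x \notin U$. Since $G[U]$ is connected and $x \notin U$, the set $U$ lies in a single connected component of $G \setminus x$; by the STG property, this component is either $V(T_{c'})$ for some child $c'$ of $x$ in $T$ (possibly $c' = y$) or $V(G) \setminus V(T_x)$. In both cases I would verify $S_{T'}(u) \cap U = S_T(u) \cap U$ for every $u \in U$: for $u \notin \{x,y\}$ this is immediate from the rotation fact above; for $u = y$ (which can only arise when $y \in U$ and $U \subseteq V(T_y)$) the identity $S_{T'}(y) = S_T(x)$ restricts to $U \cap S_{T'}(y) = U = U \cap S_T(y)$ because $U \subseteq S_T(y)$. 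Combined with the subtree-set characterization (i), this gives $T'[U] = T[U]$.

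For Case 1, both $x, y \in U$, and I would first argue $(x,y)$ is still an edge of $T[U]$: $x$ is the parent of $y$ in $T$ and pruning can only contract paths through vertices outside $U$, so $x$ remains the parent of $y$ in $T[U]$. Let $T''$ be the result of rotating $(x,y)$ in $T[U]$. To show $T'' = T'[U]$, by (i) it suffices to check they have the same subtree-set function. For $u \in U \setminus \{x,y\}$ both sides give $S_T(u) \cap U$. For $u \in \{x,y\}$, one compares: on the $T'[U]$ side, intersect the formulas for $S_{T'}(x)$ and $S_{T'}(y)$ with $U$, reading off $A_i \cap U$, $B_k \cap U$, $C_\ell \cap U$ from the figure; on the $T''$ side, the rotation in $T[U]$ assigns the components of $G[U]\setminus\{x,y\}$ to $x$ or $y$ according to adjacency in $G[U]$.

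The main obstacle will be this last comparison: one must argue that the redistribution of children induced by rotating $(x,y)$ inside $G[U]$ is exactly the restriction to $U$ of the redistribution inside $G$. The crux is that every connected component of $G[U] \setminus \{x,y\}$ sits inside a unique component of $G \setminus \{x,y\}$ (namely an $A_i$, $B_k$, or $C_\ell$), and, using that $G[U]$ is connected, a component $A_i \cap U$ that is nonempty must be adjacent in $G[U]$ to $x$ iff $A_i$ is adjacent to $x$ in $G$ (and similarly for the other components). Once this combinatorial alignment is pinned down, both sides of the comparison produce the same subtree sets and the lemma follows.
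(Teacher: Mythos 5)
The paper does not prove \cref{p:proj}; it is stated with a citation to \cite{CardinalEtAl2018b}, so there is no in-paper proof to compare against. On its own merits, your reduction to subtree-set functions is the right framework, and Case 1 is essentially correct once the component alignment at the end is verified --- which does hold: since $G$ and $G[U]$ are trees, each nonempty intersection of $U$ with a component of $G\setminus\{x,y\}$ is itself connected (intersection of two subtrees), and the adjacency type with respect to $x$ and $y$ is preserved by the path argument you sketch.

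Case 2, however, has a genuine gap. Once you fix ``WLOG $x\notin U$'', you no longer control which of $x,y$ is the parent in $T$, yet your argument for $u=y$ uses two facts that hold only when $x$ is the parent of $y$: that $U\subseteq V(T_y)$, and that $S_{T'}(y)=S_T(x)$. When $y$ is the parent of $x$ and $y$ is not the root, both fail. Take $G$ the path on $\{1,\dots,5\}$, $T$ with root $2$ having children $1$ and $3$, with $3$ the parent of $4$ and $4$ the parent of $5$; let $(x,y)=(4,3)$ and $U=\{2,3\}$. Then $y=3\in U$ and $x=4\notin U$, but $U\not\subseteq V(T_3)=\{3,4,5\}$, and after rotating $(3,4)$ one has $S_{T'}(3)=\{3\}\neq\{4,5\}=S_T(4)$. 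The desired equality $S_{T'}(y)\cap U=S_T(y)\cap U$ is still true here (both sides are $\{3\}$), but your argument does not derive it. The missing sub-case needs its own reasoning: when $y$ is the parent of $x$, one has $S_T(y)\setminus S_{T'}(y)=\{x\}\cup\bigcup_\ell C_\ell$, where the $C_\ell$ are the components of $G[V(T_y)]\setminus\{x,y\}$ adjacent only to $x$. Since $G$ is a tree, the unique path from any vertex of such a $C_\ell$ to $y$ passes through $x$, so each $C_\ell$ lies in a component of $G\setminus x$ different from the one containing $y$; as $U$ lies entirely in $y$'s component of $G\setminus x$, the difference set is disjoint from $U$, which is what is needed.
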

	 
	\section{Search trees on caterpillars}\label{sec:structure}
	
	Let $n \in \N_+$ and $m_1, m_2, \dots, m_n \in \N_0$, and write $m = \sum_{i=1}^n m_i$. We define the caterpillar $C(m_1, m_2, \dots, m_n)$ with $n$ spine vertices and $m$ leg vertices as follows. The spine of $G$ consists of the vertices $s_1, s_2, \dots, s_n$, in that order. Additionally, for each $i \in [n]$ and $j \in [m_i]$, there is a leg vertex $\ell_{i,j}$ that is adjacent to $s_i$. Clearly, every caterpillar can be constructed this way.
	
	Let $T$ be a search tree on $G = C(m_1, m_2, \dots, m_n)$, and let $x \in V(T)$. We call $x$ a \emph{leg node} if it corresponds to a leg vertex, and a \emph{spine node} if it corresponds to a spine vertex. We denote nodes in $T$ in the same way we denote vertices in $G$, i.e., we write $\ell_{i,j}$ for leg nodes and $s_i$ for spine nodes.
	
	We call a leg node \emph{bound} if it has no children, and \emph{free} otherwise.
	
	\begin{observation}\label{p:stc-structure}
		Let $T$ be a search tree on $C(m_1, m_2, \dots, m_n)$. Consider a leg node $\ell_{i,j}$. If $\ell_{i,j}$ has no children, then $s_i$ is its parent. Otherwise, $\ell_{i,j}$ has exactly one child, and $s_i$ is a descendant of $\ell_{i,j}$.
	\end{observation}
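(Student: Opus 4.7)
The plan is to read the claim off directly from the definition of an STG, which requires, for every node $x$ of $T$, that $G[V(T_x)]$ be connected and that the children of $x$ correspond bijectively to the connected components of $G[V(T_x)] \setminus x$. The only structural input I need about $C(m_1,\dots,m_n)$ is that a leg vertex $\ell_{i,j}$ has degree one in $G$, with unique neighbour $s_i$. I would split on whether $\ell_{i,j}$ has children in $T$.

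For the ``free'' case, suppose $\ell_{i,j}$ has at least one child. Applying the STG definition at $\ell_{i,j}$ itself, the graph $H := G[V(T_{\ell_{i,j}})]$ is connected and has at least two vertices. Since $\ell_{i,j} \in V(H)$ and the only neighbour of $\ell_{i,j}$ in $G$ is $s_i$, connectivity of $H$ forces $s_i \in V(H)$; equivalently, $s_i$ is a proper descendant of $\ell_{i,j}$ in $T$. Removing the degree-one vertex $\ell_{i,j}$ from the connected graph $H$ leaves a single connected component, so the STG definition gives $\ell_{i,j}$ exactly one child.

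For the ``bound'' case, suppose $\ell_{i,j}$ has no children. The caterpillar $C(m_1,\dots,m_n)$ contains both $s_i$ and $\ell_{i,j}$, so $|V(G)| \ge 2$ and $\ell_{i,j}$ cannot be the root; let $p$ be its parent. Apply the STG definition at $p$: the connected component of $G[V(T_p)] \setminus p$ corresponding to the child $\ell_{i,j}$ is $\{\ell_{i,j}\}$, so $\ell_{i,j}$ has no neighbour in $V(T_p) \setminus \{p\}$. On the other hand, $G[V(T_p)]$ is connected on at least two vertices, so the unique neighbour $s_i$ of $\ell_{i,j}$ must lie in $V(T_p)$. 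Reconciling these two facts gives $s_i = p$.

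There is no substantive obstacle here; the whole argument is a routine unwinding of the STG definition combined with the observation that a leg vertex has degree one. The only subtlety worth a sentence of justification is verifying, in the bound case, that $\ell_{i,j}$ cannot itself be the root of $T$ — which is exactly what rules out the degenerate single-vertex instance.
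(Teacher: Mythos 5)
Your argument is correct, and it fills in exactly what the paper leaves implicit: the statement is given as an \emph{observation} with no accompanying proof, the author evidently regarding it as immediate from the STG definition together with the fact that a leg vertex has degree one. Your two-case unwinding (in the free case, connectivity of $G[V(T_{\ell_{i,j}})]$ forces $s_i$ into that subtree and the single component left after deleting a degree-one vertex yields a unique child; in the bound case, the singleton component at the parent $p$ together with connectivity of $G[V(T_p)]$ pins down $s_i = p$) is precisely the intended reasoning, and your side remark that $\ell_{i,j}$ cannot be the root is the one small point genuinely worth spelling out.
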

	
	Define the \emph{spine BST} $\BST(T)$ as the projection of $T$ onto the spine vertices of $G$ (see \cref{fig:stcs} for an example). Since the spine vertices form a path, $\BST(T)$ indeed corresponds to a binary search tree. By \cref{p:proj}, each rotation between two spine nodes in $T$ corresponds to a BST rotation in $\BST(T)$. However, the converse is clearly not true in general, since two neighboring nodes $u, v$ in $\BST(T)$ might have leg nodes between them in $T$, in which case a rotation of $u,v$ in $BST(T)$ cannot be applied to $T$. Call an edge $(p,c)$ of $\BST(T)$ \emph{light} if $(p,c)$ is also an edge in $T$. Essentially, as long as we restrict ourselves to light edges, we can apply BST restructuring algorithms to $T$. This will be useful to prove our upper bound. We further observe that rotations between spine nodes do not affect the parents of leg nodes.
	
	\begin{observation}\label{p:spine-rot}
		Let $T$ be a search tree on a caterpillar, let $T'$ arise from a rotation between spine nodes in $T$, and let $\ell$ be a leg node. If $\ell$ is the root of $T$, then $\ell$ is also the root of $T'$. Otherwise, the parent of $\ell$ in $T'$ is the same as in $T$.
	\end{observation}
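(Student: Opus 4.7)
The plan is to pin down which nodes in $T$ can possibly have their parent altered by a rotation, and then verify that no leg node is among them when both endpoints of the rotated edge are spine nodes. From the rotation rule, only three kinds of nodes change parent in a rotation of $(p,c)$: (i) $c$ itself, whose new parent is the former grandparent of $c$ (or $c$ becomes the new root); (ii) $p$ itself, whose new parent is $c$; and (iii) any child $x$ of $c$ in $T$ whose subtree $V(T_x)$ contains a vertex of $G$ adjacent to $p$, which is reattached as a new child of $p$. All other nodes keep the parent they had in $T$.

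Because $p$ and $c$ are both spine nodes, a leg node $\ell$ is neither of them. If $\ell$ is the root of $T$, then the rotation occurs strictly inside a proper subtree below $\ell$, so $\ell$ stays the root of $T'$. Otherwise, it suffices to rule out that $\ell$ belongs to class~(iii). So suppose $\ell = \ell_{i,j}$ is a child of $c$ in $T$. Applying \cref{p:stc-structure}, either $\ell$ is a leaf of $T$ (and then $c = s_i$), or $\ell$ has a single child and $s_i$ is a descendant of $\ell$. In the leaf subcase, $V(T_\ell) = \{\ell\}$, and the only neighbor of $\ell$ in $G$ is $s_i = c$; hence $V(T_\ell)$ contains no vertex adjacent to $p$ and $\ell$ is not moved.

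The remaining subcase is where the main work lies. Here $V(T_\ell)$ is a component of $G[V(T_c)] \setminus c$ containing $\ell$ together with $s_i$ and possibly more, and we must show that no vertex of $V(T_\ell)$ is adjacent to $p$ in $G$. I would exploit the linear spine of the caterpillar: the spine vertices of $V(T_c)$ form a contiguous range on one side of $p$ in the spine of $V(T_p)$, and the spine vertices of $V(T_\ell)$ in turn form a contiguous range on one side of $c$ in the spine of $V(T_c)$. Combined with the fact that every leg vertex in $V(T_\ell)$ hangs off a spine vertex already inside $V(T_\ell)$, these constraints should force every vertex of $V(T_\ell)$ to be non-adjacent to $p$ in $G$. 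The main obstacle is this geometric case analysis, which must be carried out with care to cover every combination of sides and not miss a subcase where $V(T_\ell)$ happens to contain a spine vertex adjacent to $p$.
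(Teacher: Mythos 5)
Your decomposition into classes (i)--(iii), and your handling of the root case and the bound (leaf) subcase, are correct. But the ``remaining subcase'' is not merely a matter of carrying the geometric case analysis through carefully: the concern you flag at the very end --- ``a subcase where $V(T_\ell)$ happens to contain a spine vertex adjacent to $p$'' --- is exactly where the argument breaks, and in fact it yields a counterexample to the observation when $p$ and $c$ are \emph{non-adjacent} spine vertices. Concretely, take the caterpillar $C(0,1,0)$ with spine $s_1, s_2, s_3$ and a single leg $\ell_{2,1}$ at $s_2$, and let $T$ be the STG in which $s_1$ is the root, $s_3$ is its child, $\ell_{2,1}$ is the child of $s_3$, and $s_2$ is the child of $\ell_{2,1}$ (one checks this is a valid STG, and the edge $(s_1,s_3)$ is light). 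Rotating $(p,c)=(s_1,s_3)$, both spine nodes, rule~(3) of the rotation reattaches the child $\ell_{2,1}$ to $p$ because $V(T_{\ell_{2,1}})=\{\ell_{2,1},s_2\}$ contains $s_2$, which is adjacent to $p=s_1$. The result is the path $s_3, s_1, \ell_{2,1}, s_2$ from the root, so the parent of $\ell_{2,1}$ changes from $s_3$ to $s_1$.

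What \emph{is} true, and suffices for the places where the paper invokes this observation, is narrower. Either (a) $\ell$ is bound, in which case $V(T_\ell)=\{\ell\}$ has no neighbor besides $s_i=c$ and cannot be reattached (this is precisely your leaf subcase, and it is all that \cref{p:upper-all-bound} needs); or (b) $p$ and $c$ are adjacent spine vertices, say $p=s_a$ and $c=s_{a+1}$, in which case the spine vertices of $V(T_c)$ form an interval with left endpoint $s_{a+1}$, so after removing $c$ no component of $G[V(T_c)]\setminus c$ contains a vertex adjacent to $s_a$ (legs of $s_a$ cannot lie in $V(T_c)$ either, since they would be isolated there). Your geometric plan is essentially an attempt at (b), but the adjacency of $p$ and $c$ on the spine is needed to close it, and the statement as written does not grant that hypothesis. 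So the gap is not in the carefulness of the case analysis --- the general claim simply does not hold for free leg nodes under non-adjacent spine rotations.
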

	
	\begin{figure}
		\newcommand{\normalsibdist}{6mm}
		\newcommand{\normallevdist}{5mm}
		\newcommand{\interpicdist}{3mm}
		\centering
		\begin{tikzpicture}[xscale=0.5,yscale=0.7]
			\small
			\node[vertex] (s1) at (0,0) {};
			\node[vertex] (s2) at (0,-1) {};
			\node[vertex] (s3) at (0,-2) {};
			\node[vertex] (s4) at (0,-3) {};
			\node[vertex] (s5) at (0,-4) {};
			\node[left] at (s1) {\strut$s_1$};
			\node[left] at (s2) {\strut$s_2$};
			\node[left] at (s3) {\strut$s_3$};
			\node[left] at (s4) {\strut$s_4$};
			\node[left] at (s5) {\strut$s_5$};
			\draw (s1) -- (s2) -- (s3) -- (s4) -- (s5);
			
			\node[leg] (l11) at (1,0.3) {};
			\node[leg] (l12) at (1,-0.3) {};
			\node[leg] (l31) at (1,-2) {};
			\node[leg] (l41) at (1,-3) {};
			\node[leg] (l51) at (1,-4+0.3) {};
			\node[leg] (l52) at (1,-4-0.3) {};
			
			\node[right] at (l11) {\strut$\ell_{1,1}$};
			\node[right] at (l12) {\strut$\ell_{1,2}$};
			\node[right] at (l31) {\strut$\ell_{3,1}$};
			\node[right] at (l41) {\strut$\ell_{4,1}$};
			\node[right] at (l51) {\strut$\ell_{5,1}$};
			\node[right] at (l52) {\strut$\ell_{5,2}$};
			\draw (l11) -- (s1) -- (l12);
			\draw (l31) -- (s3);
			\draw (l41) -- (s4);
			\draw (l51) -- (s5) -- (l52);
		\end{tikzpicture}
		\hspace{\interpicdist}
		\begin{tikzpicture}[
				sibling distance = \normalsibdist, level distance = \normallevdist,
				level 2/.style={sibling distance=2*\normalsibdist},
				level 3/.style={sibling distance=\normalsibdist}
			]
			\small
			\node[leg] (l31) {}
				child { node[vertex] (s2) {}
					child { node[leg] (l11) {}
						child { node[leg] (l12) {}
							child { node[vertex] (s1) {} }
						}
					}
					child { node[vertex] (s4) {}
						child { node[vertex] (s3) {} }
						child { node[leg] (l41) {} }
						child { node[leg] (l51) {}
							child { node[vertex] (s5) {}
								child { node[leg] (l52) {} }
							}
						}
					}
				};
			\node[left] at (s1) {\strut$s_1$};
			\node[right] at (s2) {\strut$s_2$};
			\node[below=-1mm] at (s3) {\strut$s_3$};
			\node[right] at (s4) {\strut$s_4$};
			\node[right] at (s5) {\strut$s_5$};
			\node[left] at (l11) {\strut$\ell_{1,1}$};
			\node[left] at (l12) {\strut$\ell_{1,2}$};
			\node[right] at (l31) {\strut$\ell_{3,1}$};
			\node[below=-1mm] at (l41) {\strut$\ell_{4,1}$};
			\node[right] at (l51) {\strut$\ell_{5,1}$};
			\node[right] at (l52) {\strut$\ell_{5,2}$};
		\end{tikzpicture}
		\hspace{\interpicdist}
		\begin{tikzpicture}[sibling distance = \normalsibdist, level distance = \normallevdist]
			\small
			\node[vertex] (s2) {}
				child { node[vertex] (s1) {} }
				child { node[vertex] (s4) {}
					child { node[vertex] (s3) {} }
					child { node[vertex] (s5) {} }
				};
			\node[left] at (s1) {\strut$s_1$};
			\node[right] at (s2) {\strut$s_2$};
			\node[left] at (s3) {\strut$s_3$};
			\node[right] at (s4) {\strut$s_4$};
			\node[right] at (s5) {\strut$s_5$};
			\node at (0, -2.5) {}; 
		\end{tikzpicture}
		\hspace{\interpicdist}
		\begin{tikzpicture}[sibling distance = \normalsibdist,
				level 1/.style={level distance = 3.5mm},
				level 7/.style={level distance = \normallevdist}
			]
			\small
			
			\node[leg] (l51) {}
				child { node[leg] (l41) {}
					child { node[leg] (l11) {}
						child { node[leg] (l52) {}
							child { node[leg] (l12) {}
								child { node[leg] (l31) {}
									child { node[vertex] (s2) {}
										child { node[vertex] (s1) {} }
										child { node[vertex] (s4) {}
											child { node[vertex] (s3) {} }
											child { node[vertex] (s5) {} }
										}
									}
								}
							}
						}
					}
				};
			\node[left] at (s1) {\strut$s_1$};
			\node[right] at (s2) {\strut$s_2$};
			\node[below=-1mm] at (s3) {\strut$s_3$};
			\node[right] at (s4) {\strut$s_4$};
			\node[right] at (s5) {\strut$s_5$};
			\node[right] at (l11) {\strut$\ell_{1,1}$};
			\node[right] at (l12) {\strut$\ell_{1,2}$};
			\node[right] at (l31) {\strut$\ell_{3,1}$};
			\node[right] at (l41) {\strut$\ell_{4,1}$};
			\node[right] at (l51) {\strut$\ell_{5,1}$};
			\node[right] at (l52) {\strut$\ell_{5,2}$};
		\end{tikzpicture}
		\hspace{\interpicdist}
		\begin{tikzpicture}[sibling distance = \normalsibdist, level distance = \normallevdist,
				level 1/.style={sibling distance=2*\normalsibdist},
				level 2/.style={sibling distance=\normalsibdist}
			]
			\small
			\node[vertex] (s2) {}
				child { node[vertex] (s1) {}
					child { node[leg] (l11) {} }
					child { node[leg] (l12) {} }
				}
				child { node[vertex] (s4) {}
					child { node[vertex] (s3) {}
						child { node[leg] (l31) {} }
					}
					child { node[leg] (l41) {} }
					child { node[vertex] (s5) {}
						child { node[leg] (l51) {} }
						child { node[leg] (l52) {} }
					}
			};
			\node[left] at (s1) {\strut$s_1$};
			\node[right] at (s2) {\strut$s_2$};
			\node[above] at (s3) {\strut$s_3$};
			\node[right] at (s4) {\strut$s_4$};
			\node[right] at (s5) {\strut$s_5$};
			\node[below=-1mm] at (l11) {\strut$\ell_{1,1}$};
			\node[below=-1mm] at (l12) {\strut$\ell_{1,2}$};
			\node[below=-1mm] at (l31) {\strut$\ell_{3,1}$};
			\node[below=-1mm] at (l41) {\strut$\ell_{4,1}$};
			\node[below=-1mm] at (l51) {\strut$\ell_{5,1}$};
			\node[below=-1mm] at (l52) {\strut$\ell_{5,2}$};
		\end{tikzpicture}
		\caption{\emph{(left)} The caterpillar $G = C(2,0,1,1,2)$.
			\emph{(center left)}~An STG $T$ on $G$.
			\emph{(center)}~The spine BST $S = \BST(T)$.
			\emph{(center right)}~The STG $A( S, \pi)$ with $\pi = (\ell_{3,1}, \ell_{1,2}, \ell_{5,2}, \ell_{1,1}, \ell_{4,1}, \ell_{5,1})$.
			\emph{(right)}~The STG $B(S)$.}\label{fig:stcs}
	\end{figure}
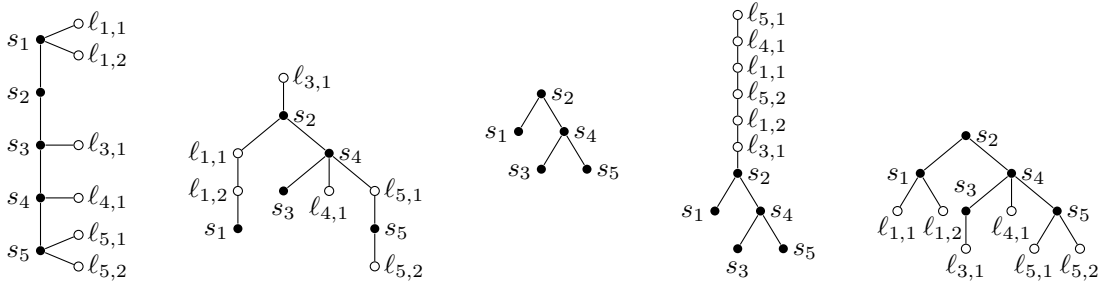

	\paragraph{Special STGs.}
	
	Before proceeding with the proofs, we define two useful kinds of STGs where the spine BST has only light edges. Let $G = C(m_1, m_2, \dots, m_n)$ be a caterpillar, let $S$ be a BST on the spine nodes of $G$, and let $\pi$ be an ordering of the leg nodes of $G$. Define $A(S, \pi)$ to be the unique search tree on $G$ such that $\BST(A(S, \pi)) = S$, each leg node is above each spine node (i.e., the leg nodes form a path at the top of the tree), and the order of the leg nodes from \emph{bottom to top} is $\pi$. Define $B(S)$ to be the unique search tree on $G$ such that all leg nodes are bound and $\BST(B(S)) = S$. Clearly, every search tree $T$ on $G$ without free leg nodes is equal to $B(\BST(T))$. \Cref{fig:stcs} shows examples.

	\section{Upper bound}\label{sec:upper}
	
	Fix a caterpillar $G = C(m_1, m_2, \dots, m_n)$. We first consider only search trees without free leg nodes.
	
	\begin{lemma}\label{p:upper-all-bound}
		Let $T_1, T_2$ be STGs on $G$ without free leg nodes. Then, $T_1$ can be transformed into $T_2$ with $\fO(n)$ rotations.
	\end{lemma}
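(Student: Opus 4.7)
Since neither $T_1$ nor $T_2$ has any free leg nodes, each is completely determined by its spine BST: $T_i = B(\BST(T_i))$ for $i \in \{1,2\}$. Set $S_1 := \BST(T_1)$ and $S_2 := \BST(T_2)$. The plan is to lift a short BST-rotation sequence from $S_1$ to $S_2$ into an STG-rotation sequence from $T_1$ to $T_2$. Since $S_1$ and $S_2$ are BSTs on the $n$ spine nodes, the classical $2n-6$ diameter bound for the conventional associahedron supplies a sequence $S_1 = R_0 \to R_1 \to \cdots \to R_k = S_2$ with $k = \fO(n)$, where each consecutive pair $R_{i-1},R_i$ differs by a single BST rotation on some edge $(p_i,c_i)$.

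I would then prove by induction on $i$ that $B(R_{i-1})$ can be transformed into $B(R_i)$ by the single STG rotation on $(p_i,c_i)$. Two things need to be checked. First, $(p_i,c_i)$ really is an edge of $B(R_{i-1})$: in $B(R_{i-1})$ every leg node is bound and thus a leaf child of its spine parent, so no leg node lies between two spine nodes along any root-to-leaf path. Consequently the subtree induced on the spine nodes, viewed as a rooted tree, is exactly $R_{i-1}$, and every edge of $R_{i-1}$ is a (light) edge of $B(R_{i-1})$.

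Second, the STG obtained by rotating $(p_i,c_i)$ in $B(R_{i-1})$ equals $B(R_i)$. By \cref{p:proj}, projecting the result onto the spine gives the tree obtained by the BST rotation on $(p_i,c_i)$ in $R_{i-1}$, which is $R_i$ by construction. By \cref{p:spine-rot}, a rotation between two spine nodes never changes the parent of any leg node, so every leg that was bound in $B(R_{i-1})$ is still bound afterward. Since an STG on $G$ without free leg nodes is uniquely determined by its spine BST, the result must equal $B(R_i)$, closing the induction.

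Chaining these $\fO(n)$ STG rotations transforms $T_1 = B(S_1)$ into $T_2 = B(S_2)$, giving the bound. I expect no substantive obstacle: the only nontrivial step is verifying that spine rotations lift cleanly to $B(\cdot)$, which is exactly what \cref{p:proj} and \cref{p:spine-rot} were set up to provide.
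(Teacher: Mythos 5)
Your proof is correct and takes essentially the same route as the paper: both lift an $\fO(n)$-length rotation sequence between the spine BSTs $S_1, S_2$ to an STG-rotation sequence via the observation that spine rotations preserve the bound/free status of leg nodes (so all spine edges stay light throughout), using exactly \cref{p:proj} and \cref{p:spine-rot}. The paper's version is slightly terser; your explicit induction on the intermediate trees $B(R_i)$ is a fine way to spell out the same argument.
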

	\begin{proof}
		Let $S_1 = \BST(T_1)$ and $S_2 = \BST(T_2)$. As stated in the introduction, there is a sequence of at most $\fO(n)$ rotations that transforms $S_1$ into $S_2$. We simply apply these rotations to $T_1$. For this to be well-defined, we need to show that we never (attempt to) rotate a heavy edge. Since $T_1$ has no free leg nodes, all edges in $S_1$ are light, so the first rotation goes through. Furthermore, a rotation between two spine nodes can never change a leg node from bound to free (or vice versa). Thus, by induction, after each rotation, all leg nodes are still bound, so we can apply the next rotation.
	\end{proof}
	
	The above lemma provides us with a ``core'' of the caterpillar associahedron with linear diameter. In the following, we show that the rotation distance from any search tree to \emph{some} STG without free leg nodes is $\fO(n + m H'(G))$. By the triangle inequality, this means that the rotation distance between any two STGs is at most $2 \cdot \fO(n + m H'(G)) + \fO(n) = \fO(n + m H'(G))$, and thus we have the upper bound of \cref{p:main}.
	
	We first show how to reduce the problem to the case that $T = A(S, \pi)$ for some BST $S$ and some leg node ordering $\pi$. For this, the following lemma is useful.
	
	\begin{lemma}\label{p:bst-restr-rot}
		Let $T$ be a BST. There exists a sequence of $\fO(n)$ rotations on $T$ such that
		\begin{enumerate}[(i)]
			\itemsep0pt
			\item every rotation involves only nodes at depth at most 3; and
			\item every spine node becomes the root of $\BST(T)$ at some point.
		\end{enumerate}
	\end{lemma}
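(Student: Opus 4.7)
My plan is to exhibit a recursive procedure $\mathrm{Visit}(v)$ that, assuming $v$ is at the root of the current tree and the subtree $T_c$ of each child $c$ of $v$ in $T$ is still intact within the current tree, visits every node of $T_v$ as the root at some point and restores the tree to its pre-call state upon returning. The body of $\mathrm{Visit}(v)$ first ``visits'' $v$ for free (it is already at the root); then, for each child $c$ of $v$ in $T$, it brings $c$ to the root with at most two rotations, recursively calls $\mathrm{Visit}(c)$, and reverses those rotations to restore $v$. A single rotation $(v, c)$ suffices when $c$ currently lies at depth $2$; when $c$ currently lies at depth $3$, one applies a splay-style double rotation---zig-zig (grandparent rotation then parent rotation) when $v$, the current parent of $c$, and $c$ all lie on the same side, and zig-zag (parent rotation then grandparent rotation) when they alternate. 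Every such rotation involves only nodes at depth $1$, $2$, or $3$ of the tree at the time of the rotation, so condition~(i) is immediate.

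The technical heart of the argument, and what I expect to be the main subtlety, is a direct case check on the four splay patterns showing that after bringing a depth-$3$ child $c$ to the root via the correct splay variant, both children of $c$ in $T$ land at depth at most $3$ in the resulting tree, so that the invariant is reestablished for the recursive call on $c$. A naive ``pull $c$ up twice along the same path'' sequence would otherwise leave one of $c$'s $T$-children at depth $4$, breaking the invariant. Once this case analysis is in hand, recursive correctness follows by structural induction: $\mathrm{Visit}(c)$ leaves the tree exactly as it found it, so the post-recursion undo rotations faithfully return $v$ to the root together with the rest of the configuration.

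Finally, every node $v \in V(T)$ occupies the root at the start of its unique invocation of $\mathrm{Visit}(v)$, which gives condition~(ii). Each invocation performs at most eight rotations outside recursion (at most four per child of $v$ in $T$---two to bring the child up and two to undo---across at most two such children), so the total number of rotations over the entire procedure is at most $8n = \fO(n)$.
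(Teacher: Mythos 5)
Your proof is correct in outline and takes a genuinely different route from the paper's. The paper's argument is short: it cites Cleary's result that any BST can be transformed into the right path using $\fO(n)$ rotations at the root or at the root's right child, and then simply rotates the root with its right child $n-1$ times so that every node passes through the root. Your proof is instead self-contained: it runs a recursive DFS over the original tree $T$, splaying each $T$-child to the root before recursing and undoing afterwards, and the invariant you maintain (each $T$-child of the current root sits at depth at most~3 with its subtree intact) is exactly what makes the rotation count linear and keeps every rotated edge within depth~3. Two small remarks. First, you present the case check as the main subtlety and list both zig-zig and zig-zag, but a short induction on your own invariant shows the zig-zig configuration never actually arises: whether a $T$-child reached the root by a single rotation (from depth~2) or by a zig-zag (from depth~3), both of its $T$-children end up at depth at most~3 in an \emph{alternating} (zig-zag) position, never a straight-line one; so only the zig-zag case is ever exercised, though including zig-zig does no harm. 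Second, condition~(i) should be read as ``the rotated edge has both endpoints at depth at most~3'' --- equivalently, the parent endpoint has depth at most~2 --- since that is what is actually used in \cref{p:upper-reduction} to guarantee the edge is light after a $\pCleanup$ step; all of your rotated edges sit at depths 1--2 or 2--3, so this is satisfied. The paper's version is shorter but leans on Cleary as a black box; yours trades that citation for an explicit (and not difficult, but not yet fully carried out) case analysis.
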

	\begin{proof}
		 We start by rotating $T$ into the \emph{right path}, where no node has a left child. Cleary~\cite{Cleary2002} showed that this is possible using $\fO(n)$ rotations at the root or its right child. Then, repeatedly rotate the root with its right child, until the root has no right child. This way, each node becomes the root at least once. Clearly, the rotations used only involve the root, its children, and its grandchildren.
	\end{proof}
	
	\begin{lemma}\label{p:upper-reduction}
		Let $T$ be an arbitrary search tree on $G$. Then $T$ can be transformed into some $A(S,\pi)$ using $2m + \fO(n)$ rotations.
	\end{lemma}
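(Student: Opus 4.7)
\emph{Proof plan.} My plan is to adapt the $\fO(n)$ spine-BST rotation sequence from Lemma \ref{p:bst-restr-rot} to the full search tree $T$, interleaving it with rotations that push each leg node onto the \emph{top chain}, by which I mean the chain of nodes above the topmost spine node in $T$. Once every leg lies on the top chain, the resulting tree has the form $A(S,\pi)$ for some choice of $S$ and $\pi$.

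First I would apply Lemma \ref{p:bst-restr-rot} to $\BST(T)$ to obtain an $\fO(n)$-length sequence of spine rotations, each at depth at most $3$ in the spine BST, such that every spine node becomes the root of $\BST(T)$ at some point. I would then execute these rotations one at a time on $T$ itself, inserting auxiliary leg rotations as needed. Two kinds of insertions occur. Whenever the next spine rotation corresponds to a heavy edge in $T$ (because one or more free legs sit between the two spines), I would first rotate the obstructing legs upward to light the edge. Whenever a spine $s_i$ first becomes the topmost spine in $\BST(T)$, I would rotate each of its bound leg children (which by Observation \ref{p:stc-structure} are direct children of $s_i$ in $T$) up once, so that each joins the top chain; the free leg nodes $\ell_{i,j}$ are automatically on the top chain at that moment, since $s_i$ being topmost forces that no spine lies above $\ell_{i,j}$.

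To arrive at the bound $2m + \fO(n)$, the plan is to argue that the spine rotations contribute $\fO(n)$ and each leg contributes at most two rotations to the total. A bound leg costs exactly one rotation (its promotion to the top chain when its spine becomes topmost), while an originally free leg costs at most two clearings: because Lemma \ref{p:bst-restr-rot} restricts rotations to nodes at depth at most $3$ in the spine BST, any obstructing free leg is within two upward rotations of the top chain, and once it reaches the chain it stays there.

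The hard part will be making this amortized accounting rigorous. The two subtle points are verifying that once a leg has joined the top chain, no subsequent spine rotation can dislodge it back below some spine, and showing that across the entire sequence each free leg gets rotated at most twice even though the topmost spine keeps changing. I expect to handle both via a potential function counting roughly twice the number of legs not yet on the top chain, combined with a careful case analysis that exploits the specific structure of Cleary's right-path construction underlying Lemma \ref{p:bst-restr-rot}.
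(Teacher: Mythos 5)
Your overall strategy matches the paper's: run the depth-restricted rotation sequence of \cref{p:bst-restr-rot} on the spine BST, interleaving leg rotations so that by the end every leg sits above every spine node. The paper's version of this interleaving is a holistic \emph{cleanup} step: before the first spine rotation and after every subsequent one, it repeatedly rotates \emph{every} leg that currently has a spine parent and at most two spine ancestors, until no such leg remains. This ensures two things at once: (a)~all spine-BST edges incident to the root or its children become light, so the next rotation from \cref{p:bst-restr-rot} is applicable, and (b)~every leg that is touched is driven all the way to the top chain, after which (by \cref{p:spine-rot}) it is never touched again, giving the clean ``each leg is rotated at most twice'' bound.

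Your version is lazier: you only rotate a leg when it physically obstructs the next spine rotation, and only ``enough to light the edge''. This is where I see a genuine gap. A leg $\ell$ obstructing a rotation $(p,c)$ with $p$ at spine-BST depth~$2$ has two spine ancestors; lighting $(p,c)$ costs one rotation of $\ell$ past $p$, after which $\ell$ has \emph{one} remaining spine ancestor, so it is \emph{not yet} on the top chain. Subsequent spine rotations can then \emph{increase} $\ell$'s spine-ancestor count again (a rotation $(u,v)$ adds $v$ as a spine ancestor to every node in $T_u\setminus T_v$, in particular to any leg sitting in $u$'s other subtree), so $\ell$ can obstruct again with two spine ancestors and still cost one rotation each time. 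Your proposed potential function ``twice the number of legs not yet on the top chain'' therefore does not obviously decrease with each obstruction-clearing, because a single clearing need not move a leg onto the chain, and the potential can be replenished by spine rotations. Your remark that the case analysis must ``exploit the specific structure of Cleary's right-path construction'' is an honest signal that you sensed this, but that is exactly the piece that is missing. The paper sidesteps the whole issue by making cleanup run to quiescence each time: once a leg is touched at all, it is rotated until it has zero spine ancestors, and the top-chain invariant (preserved by spine rotations via \cref{p:spine-rot}) guarantees it is never touched again. Adopting that fix — rotate every obstructing or near-top leg all the way onto the top chain, not merely past one spine node — turns your sketch into the paper's argument and makes the $2m + \fO(n)$ count immediate.
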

	\begin{proof}
		An STG has the form $A(S,\pi)$ if and only if each leg node has no spine ancestor. The basic idea of the proof is to apply rotations between spine nodes to eventually bring each spine node to the root (of the spine BST). At any point, if the current root of the spine BST has a leg node as a child, rotate it with the leg node. We refer to all such rotations between two spine node rotations as the $\pCleanup$ step. By \cref{p:spine-rot}, every leg node is transported to the top this way.
		
		The problem with this approach is that we can only rotate light edges in the spine BST, and the only edges that we know must be light are the edges between the BST root and its children. However, if we extend our $\pCleanup$ step to consider leg nodes that are somewhat deeper in the tree (that is, nodes with two spine ancestors instead of just one), we guarantee that all BST edges \emph{near} the BST root are light. This allows us to apply \cref{p:bst-restr-rot} to bring each spine node to the root. We now describe the sequence of rotations more formally, starting with the $\pCleanup$ step.
		
		Let $T'$ be the current STG. Let $\pCleanup(T')$ be the following sequence of rotations: As long as there is a leg node $\ell$ with a spine parent $p$ and at most two spine ancestors, rotate $(p,\ell)$. Arbitrarily resolve conflicts. Let $T''$ be the STG after applying $\pCleanup$. Clearly, no spine node $s$ with $\depth_{\BST(T'')}(s) \le 2$ has a leg node child, so all edges in $\BST(T'')$ involving the root or its children are light. Moreover, each leg node that is touched by $\pCleanup$ is rotated at most twice, and afterwards has no spine node ancestors.
		
		Let $\fX$ be the sequence of rotations obtained by applying \cref{p:bst-restr-rot} to $\BST(T)$. We first apply $\pCleanup$ to $T$, then apply the spine rotations in $\fX$, with a $\pCleanup$ step after each spine rotation. Since each rotation in $\fX$ involves either the root of the spine BST or one of its children, the rotation is applied to a light edge. Thus, the whole sequence can indeed be applied to $T$.
		
		The number of rotations is at most $2m + \fO(n)$. Indeed, since no rotation between spine nodes can change the parent of a leg node (see \cref{p:spine-rot}), each leg node is only touched in a single $\pCleanup$ step (where it is rotated above all spine nodes), and only twice in that $\pCleanup$ step. The length of $\fX$ is $\fO(n)$ by \cref{p:bst-restr-rot}.
		
		Finally, we show that the final tree is indeed of the form $A(S, \pi)$. For this, it suffices to show that each leg node that has at least one spine ancestor in $T$ is touched in some $\pCleanup$ step. Suppose that such a leg node $\ell$ is not involved in a $\pCleanup$ step. Without loss of generality, let the parent $p$ of $\ell$ be a spine node. By \cref{p:spine-rot}, since $\ell$ is not touched in a $\pCleanup$ step and we never rotate between leg nodes, $p$ stays the parent of $\ell$ throughout the sequence of rotations. However, then $p$ will be the root of $\BST(T)$ at some point, so $\ell$ is rotated upwards by the next $\pCleanup$ step, a contradiction.
	\end{proof}
	
	It remains to show how to transform $A(S,\pi)$ into an STG without free leg nodes.
	
	\begin{lemma}\label{p:A_S_pi}
		Let $T = A(S,\pi)$. Then there is a sequence of $\fO(n + H'(G) \cdot m)$ rotations that transform $T$ into a search tree without bound leg nodes.
	\end{lemma}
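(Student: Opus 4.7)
The plan is to choose an optimal static BST for the leg-frequency vector and then ``thread'' the leg nodes into it. Concretely, let $S^*$ be an optimal static BST on the spine with weights $m_1, m_2, \dots, m_n$, so that $\cost(S^*, m_1, \dots, m_n) = \OPTST(m_1, \dots, m_n) \le 2 H'(G) \cdot m$ by \cref{p:opt-st-entropy}. The procedure has two phases. In Phase~1 we rotate $T = A(S,\pi)$ into $A(S^*, \pi)$ by restructuring only the spine portion of the tree. In Phase~2 we push the leg nodes down into the spine one at a time, from the bottom of the leg path upward, so that each $\ell_{i,j}$ ends up as a bound child of $s_i$. The resulting tree is $B(S^*)$ and thus has no free leg nodes, as required.

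Phase~1 is cheap: since every leg node sits above the entire spine BST in $A(S,\pi)$, every edge of $\BST(T)$ is a light edge of $T$, so we can apply the classical $2n-6$ BST rotation bound directly to the spine portion at a cost of $\fO(n)$ rotations. By \cref{p:spine-rot}, spine-spine rotations do not change any leg node's parent, so the leg path above remains the same word $\pi$ throughout and we arrive at $A(S^*, \pi)$. For Phase~2, the key claim is that pushing a single leg node $\ell = \ell_{i,j}$ from immediately above the spine down to a bound position under $s_i$ uses exactly $\depth_{S^*}(s_i)$ rotations and leaves $S^*$ intact. This follows by direct inspection of the STG rotation rule: rotating the edge $(\ell, s)$ where $s$ is the current spine child of $\ell$ advances $\ell$ one step along the root-to-$s_i$ path in $S^*$, relocating to $\ell$ the unique child subtree of $s$ that contains $s_i$ (since $s_i$ is $\ell$'s only graph neighbor) and leaving every other child of $s$ in place; after $\depth_{S^*}(s_i)$ such rotations the path is exhausted and $\ell$ becomes a childless leaf attached to $s_i$, while every other spine edge of $S^*$ has been rebuilt exactly as before.

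Summing over the $m$ leg nodes processed in Phase~2 gives $\sum_{i} m_i \cdot \depth_{S^*}(s_i) = \cost(S^*, m_1, \dots, m_n) \le 2H'(G) \cdot m$ rotations, which, combined with Phase~1, yields the claimed $\fO(n + H'(G) \cdot m)$ bound. The main technical point to check carefully is non-interference across leg nodes. When pushing $\ell^{(k)}$, the already-bound legs $\ell^{(1)}, \dots, \ell^{(k-1)}$ hang as single-vertex subtrees off various spine nodes, and because legs are pairwise non-adjacent in $G$, the STG rotation rule keeps each of them pinned to its current spine parent during every subsequent $(\ell^{(k)}, s)$ rotation; likewise, the leg nodes still above $\ell^{(k)}$ on the leg path are not touched, and $\ell^{(k)}$'s spine child at the start of its push is the root of the (preserved) $S^*$. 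Once these invariants are established, the cost estimate is just the static-cost formula, and Mehlhorn's bound does the rest.
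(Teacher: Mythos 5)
Your proposal is correct and follows essentially the same route as the paper: rotate the spine BST into the Mehlhorn‐optimal static tree at cost $\fO(n)$ (using that all spine edges of $A(S,\pi)$ are light), then push leg nodes down from the bottom of the leg path, each $\ell_{i,j}$ costing $\depth_{S^*}(s_i)$ rotations, for a total of $\cost(S^*, m_1, \dots, m_n) \le 2H'(G)\cdot m$. The extra care you take in verifying non-interference between already-bound legs and the currently descending one is a point the paper leaves implicit, but the structure of the argument is identical.
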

	\begin{proof}
		Since every edge in $\BST(T)$ is light, we can first transform $\BST(T)$ into an arbitrary BST $S'$ using $\fO(n)$ rotations. We will later specify $S'$. Let $T' = A(S', \pi)$ be the resulting STG. Now pick the lowest leg node $\ell_{i,j}$ in $T'$, and rotate it down until it is bound (i.e., a child of $s_i$). Clearly, this requires $\depth_{\BST(T')}(s_i) = \depth_{S'}(s_i)$ rotations. Repeat this until all leg nodes are bound.
		
		The total number of rotations is
		\begin{align*}
			\fO(n) + \sum_{i=0}^n m_i \cdot \depth_{S'}(s_i).
		\end{align*}
	
		This is precisely $\cost(S', m_1, m_2, \dots, m_n)$, the cost of \emph{accessing} each $i \in [n]$ with frequency $m_i$ in the static BST $S'$. As such, if we choose $S'$ to be the optimal static BST for these frequencies, we need $\fO(n) + \OPTST(m_1, m_2, \dots, m_n) \le \fO(n) + 2 H'(G) \cdot m$ rotations, by \cref{p:opt-st-entropy}.
	\end{proof}
	
	\Cref{p:upper-all-bound,p:upper-reduction,p:A_S_pi} together imply the upper bound in \cref{p:main}.
	
	In the proof of \cref{p:A_S_pi}, we essentially treat the leg nodes as queries to our optimal static BST, where a leg node $\ell_{i,j}$ queries the spine node $s_i$. Rotating the leg nodes down is akin to moving down the pointer in the static BST model. Here, the pointer always points at the parent of the one leg node that has a spine node parent.
	
	Observe that we can similarly implement the dynamic BST model as rotations transforming $A(S, \pi)$ into a search tree without bound leg nodes, simply by allowing spine node rotations (BST rotations) in between leg node rotations (pointer moves). If the dynamic BST algorithm wants to rotate the single heavy edge in the spine BST of our STG, we have to move the leg node out of the way (and back afterwards), but this only adds a constant-factor overhead. Thus we obtain a generalization of the dynamic BST model, where we can start processing queries before finishing previous ones (although the way ``pointers'' work in this model is not very intuitive).
	
	Let $\sigma = \sigma(\pi)$ be the sequence of spine nodes obtained by replacing every leg node in $\pi$ by its adjacent spine node.
	Our observations imply that transforming $A(S,\pi)$ into an STG without free leg nodes requires no more than $\fO(\OPT(S, \sigma))$ rotations, and \cref{p:A_S_pi} essentially uses the fact that $\OPT(S, \sigma) \le \OPTST(m_1, m_2, \dots, m_n)$. In the next section, we show that \emph{Wilber's first lower bound}~\cite{Wilber1989} for $\OPT(S, \sigma)$ also holds for our generalized model.
	
	\section{Lower bound}\label{sec:lower}
	
	We start by defining a variant of Wilber's first lower bound and proving that it is equal to the Shannon entropy of the query distribution in the worst case (up to a constant factor). Then, we show that it also bounds the rotation distance between $A(S, \sigma)$ and $B(S)$ if $\sigma$ is the worst-case ordering and $S$ is a suitable search tree.
	
	\subsection{Wilber's first lower bound for binary search trees}\label{sec:wilber_bst}
	
	Let $S$ be a binary search tree on $n$ nodes, let $\sigma = (x_1, x_2, \dots, x_m)$ be a sequence of queries, and let $u$ be a node of $S$. Then we define $\lambda(S, u, \sigma)$ as follows. If $u$ has at most one child, then $\lambda(S,u,\sigma) = 0$. Otherwise, let $v, w$ be the children of $u$ and write $A = V(S_u) = \{u\} \cup V(S_v) \cup V(S_w)$. Let $\sigma\restr{A}$ be the sequence obtained from $\sigma$ by removing all elements not in $A$. Now $\lambda(S,u,\sigma)$ is defined as the number of times the sequence $\sigma\restr{A}$ switches between an element of $V(S_v)$, an element of $V(S_w)$, and $u$. More formally, $\lambda(S,u,\sigma)$ is the number of pairs of adjacent values $x,y$ in $\sigma$ such that neither $x,y \in V(S_v)$, nor $x,y \in V(S_w)$, nor $x = y = u$. Let $\Lambda(S,\sigma) = \sum_{u \in V(S)} \lambda(S,u,\sigma)$.
	
	For convenience, define $\lambda'(S,u,\sigma)$ as $\lambda(S,u,\sigma)$ plus the number of occurrences of $u$ in $\sigma$, and let $\Lambda'(S,\sigma) = \sum_{u \in V(S)} \lambda'(S,u,\sigma) = \Lambda(S,\sigma) + m$.
	It is known that $\OPT(S, \sigma) \in \Omega(\Lambda'(S, \sigma))$. This is not tight in general~\cite{ChalermsookEtAl2020,LecomteWeinstein2020}.
	Still, Wilber~\cite{Wilber1989} showed that if $\sigma$ is the \emph{bit reversal permutation}, then $\Lambda'(S, \sigma) \in \Theta(n \log n)$ for all $S$. This bound is already matched by a balanced static tree, so, on that sequence, Wilber's bound is tight and dynamic BSTs do not perform better than balanced trees. We now generalize this result to arbitrary distributions.
	
	\begin{lemma}\label{p:wilber-static-opt}
		Let $n \in \N_+$, let $m_1, m_2, \dots, m_n \in \N_0$, and let $m = \sum_{i=1}^n m_i$.
		Then there is a BST $S$ on $[n]$ and a sequence $\sigma$ of length $m$ where each $i \in [n]$ occurs precisely $m_i$ times, such that $\Lambda'(S,\sigma) \ge \frac{1}{2}H(m_1, m_2, \dots, m_n)$.
	\end{lemma}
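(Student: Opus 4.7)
The plan is to construct $S$ and $\sigma$ explicitly and then lower-bound $\Lambda'(S,\sigma)$ term by term. For $S$, I would take a weight-balanced BST in Mehlhorn's style: recursively set the root to the weighted median $k$ (the smallest index with $\sum_{i \le k} m_i \ge m/2$) and recurse on the index sets $\{1,\ldots,k-1\}$ and $\{k+1,\ldots,n\}$. For $\sigma$, I would proceed bottom-up along $S$: at each internal node $u$ with children $v,w$ and inductively built sequences $\sigma_v,\sigma_w$ of lengths $M_v = \sum_{i \in V(S_v)} m_i$ and $M_w = \sum_{i \in V(S_w)} m_i$, form $\sigma_u$ by merging $\sigma_v$ and $\sigma_w$ as uniformly as possible (so their labels alternate at least $2\min(M_v, M_w) - 2$ times) and then prepending the $m_u$ copies of $u$.

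First I would verify, by a straightforward induction on $S$, that $\sigma\restr{V(S_u)}$ is exactly the sequence $\sigma_u$ constructed at $u$; this uses only that a merge preserves each operand as a subsequence. In particular, every internal node $u$ with two children satisfies $\lambda(S,u,\sigma) \ge 2\min(M_v,M_w) - 2$, with the U-to-first-child transition from the prepended block contributing an additional $+1$ when $m_u \ge 1$. Summing $\lambda'(S,u,\sigma) = \lambda(S,u,\sigma) + (\text{occurrences of } u \text{ in } \sigma)$ over all nodes then gives
\[
    \Lambda'(S, \sigma) \ge 2\sum_u \min(M_v, M_w) \;+\; m \;-\; O(n),
\]
so it remains to show $\sum_u \min(M_v, M_w) = \Omega(H(m_1,\ldots,m_n)\cdot m)$, where the sum is over internal nodes with two children.

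The hard part will be this last inequality. The natural approach is that the weighted-median split yields $\min(M_v, M_w) \ge \tfrac{1}{2}(M_u - m_{r_u})$ at each internal $u$, where $r_u$ is the root of $S_u$; this telescopes via $\cost(S,m_1,\ldots,m_n) = \sum_u M_u$ to $\sum_u (M_u - m_{r_u}) = \cost(S,m_1,\ldots,m_n) - m$, which by $\cost(S,\cdot) \ge \OPTST(\cdot) \ge \tfrac{1}{2}H(m_1,\ldots,m_n)\cdot m$ (\cref{p:opt-st-entropy}) is at least $\tfrac{1}{2}H m - m$. The obstacle is that the weighted median is badly unbalanced precisely when some $m_{r_u}$ is comparable to $M_u$, so $\min(M_v, M_w)$ may nearly vanish at such nodes. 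The standard Mehlhorn-style fix is that a very heavy root contributes only a bounded amount to the entropy ($\tfrac{m_{r_u}}{M_u}\log\tfrac{M_u}{m_{r_u}} \le 1$ whenever $m_{r_u}/M_u \ge 1/2$), so the missing balance at those nodes can be charged against the heavy roots themselves. A formal induction on $m$ (or, equivalently, on the recursive structure of $S$) converts this bookkeeping into $\sum_u \min(M_v, M_w) \ge c \cdot H(m_1,\ldots,m_n) \cdot m$ for an absolute constant $c > 0$, which combined with the displayed inequality above completes the proof.
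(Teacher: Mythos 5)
Your construction is essentially the paper's: a Mehlhorn-style weight-balanced BST with a bottom-up merge for $\sigma$, plus the induction that $\sigma\restr{V(S_u)} = \sigma_u$. But the analysis goes wrong at the reduction.

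The step ``it remains to show $\sum_u \min(M_v,M_w) = \Omega(H\cdot m)$'' is not a correct reduction: that inequality is \emph{false} for the weighted-median tree. Take $m_1 = m_n = m/2$ and $m_i = 0$ otherwise. Then $H = 1$, so the right-hand side is $\Theta(m)$, but with the ``smallest index with cumulative weight $\ge m/2$'' rule the root is $1$ (empty left subtree), the next root is $n$ (empty right subtree), and so on down a path; no node has two nonempty-weight children and $\sum_u \min(M_v,M_w) = 0$. For the same reason the claimed per-node inequality $\min(M_v,M_w) \ge \tfrac12(M_u - m_{r_u})$ fails: the weighted median only guarantees $m_{r_u} + \min(M_v,M_w) \ge \tfrac12 M_u$, i.e.\ $\min(M_v,M_w) \ge \tfrac12 M_u - m_{r_u}$, which is strictly weaker (and in the example above it reads $0 \ge 0$, not $0 \ge M_u/4$). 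Charging against heavy roots cannot rescue a false target; the fix is not a bookkeeping patch on $\sum_u \min(M_v,M_w)$ but a different decomposition.

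The correct move --- which is exactly what the paper does and which you almost have --- is to never separate $m_{r_u}$ from $\min(M_v,M_w)$. Your own merge gives $\lambda'(S,u,\sigma) \ge m_{r_u} + \min(M_v,M_w)$ (you don't even need the factor $2$, and dropping it also removes the troublesome $-O(n)$ slack, which by itself would already break the clean bound claimed in the lemma when $H\cdot m = O(n)$). Combined with the weighted-median guarantee $m_{r_u} + \min(M_v,M_w) \ge \tfrac12 M_u$, summing over $u$ yields $\Lambda'(S,\sigma) \ge \tfrac12\sum_u M_u = \tfrac12\,\cost(S,m_1,\dots,m_n) \ge \tfrac14 H\cdot m$ by \cref{p:opt-st-entropy}, with no side conditions. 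So you have the right construction and the right ingredients; the error is in decoupling the two terms whose \emph{sum} is what is actually bounded below.
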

	\begin{proof}
		We recursively construct a BST $S_{p,q}$ on the interval $[p,q]$, where $1 \le p \le q \le n$, and in the end set $S = S_{1,n}$. The construction is essentially the approximately optimal static BST construction due to Mehlhorn~\cite{Mehlhorn1975}.
		
		Fix $p$ and $q$. Let $k = q-p+1$ be the number of nodes in $S_{p,q}$, and, for each $i$ with $p \le i \le q$, let $a_i = \sum_{j=p}^{i-1} m_j$ and $b_i = \sum_{j=i+1}^q m_j$.
		We claim that there exists an $i \in [p,q]$ such that $m_i + \min(a_i,b_i) \ge \frac{k}{2}$. Suppose not. Then, for each $i$, we have either (1)~$m_i + a_i < \frac{k}{2} < b_i$ or (2)~$m_i + b_i < \frac{k}{2} < a_i$. Clearly, for $i = p$, (2) cannot hold, and likewise for $i = q$, (1) cannot hold. Let $i'$ be the highest index where (1) holds. Then, $i < q$ and $m_i + a_i < b_i = m_{i+1} + b_{i+1} < a_{i+1} = m_i + a_i$, a contradiction.
		
		Choose $i \in [p,q]$ such that $m_i + \min(a_i,b_i) \ge \frac{k}{2}$. Make $i$ the root of $S_{p,q}$, and attach the recursively constructed subtrees $S_{p,i-1}$ and $S_{i+1,q}$ as the left and right child to it (for $p' > q'$, we let $S_{p',q'}$ be the empty BST).
		
		Let $c(p,q) = \sum_{j=p}^q m_j \cdot \depth_{S_{p,q}}(j)$. Intuitively, $c(p,q)$ is the cost of accessing the relevant nodes within $S_{p,q}$. We now recursively construct a sequence $\sigma_{p,q}$ such that $c(p,q) \le 2 \Lambda'(S_{p,q}, \sigma_{p,q})$.
		
		First, if $p = q$, then let $\sigma_{p,q}$ simply consist of $m_p$ times the element $p$. Clearly, $c(p,q) = m_p = \Lambda'(S_{p,q}, \sigma_{p,q})$. Otherwise, let $i$ be the root of $S_{p,q}$. We construct $\sigma_{p,q}$ by combining $\sigma_{p,i-1}$, $\sigma_{i+1,q}$, and the $m_i$ occurrences of the element $i$ as follows. Start with the $m_i$ occurrences of $i$, then alternate between $\sigma_{p,i-1}$ and $\sigma_{i+1,q}$ for as long as possible, and finally append the remaining elements from either $\sigma_{p,i-1}$ or $\sigma_{i+1,q}$. Since $\sigma_{p,i-1}$ has length $a_i$, and $\sigma_{i+1,q}$ has length $b_i$, we have $\lambda'(S_{p,q}, i, \sigma_{p,q}) \ge m_i + \min(a_i, b_i) \ge \frac{k}{2}$.
		
		Clearly, if $j \in [p,i-1]$, then $\depth_{S_{p,q}}(j) = 1 + \depth_{S_{p,i-1}}(j)$, and similarly for elements $j \in [i+1,q]$ in the right subtree. Thus, by induction,
		\begin{align*}
			c(p,q) &= m_i + a_i + c(p,i-1) + b_i + c(i+1,q)\\
				&\le k + 2\Lambda'(S_{p,i-1}, \sigma_{p,i-1}) + 2\Lambda'(S_{i+1,q}, \sigma_{i+1,q})\\
				&\le 2 \lambda'(S_{p,q}, i, \sigma_{p,q}) + 2\Lambda'(S_{p,i-1}, \sigma_{p,i-1}) + 2\Lambda'(S_{i+1,q}, \sigma_{i+1,q}) \le 2\Lambda'(S_{p,q}, \sigma_{p,q}).
		\end{align*}
	
		Now let $S = S_{1,n}$ and $\sigma = \sigma_{1,n}$. We have $c(1,n) \le 2 \Lambda'(S,\sigma)$, and by \cref{p:opt-st-entropy}, we know that $c(1,n) \ge \cost(m_1, m_2, \dots, m_n) \ge H(m_1, m_2, \dots, m_n)$. This concludes the proof.
	\end{proof}
	
	\subsection{Wilber's lower bound for rotation distance}
	
	We now show that the rotation distance between $A(S, \pi)$ and $B(S)$ is at least $\frac{1}{2} \Lambda'(S, \sigma(\pi))$, where $\sigma(\pi)$ is defined as in \cref{sec:upper}, by replacing the leaf nodes in $\pi$ with their adjacent spine nodes. Our proof is based on~\cite[Lemma 8]{CardinalEtAl2018b}.
	
	\begin{lemma}\label{p:wilber-stg}
		Let $G = C(m_1, m_2, \dots, m_n)$ be a caterpillar, let $S$ be a BST on the spine nodes of $G$, and let $\pi$ be an ordering of the leg nodes of $G$.
		Then, transforming $A(S,\pi)$ into $B(S)$ requires at least $\frac{1}{2}\Lambda'(S, \sigma(\pi))$ rotations.
	\end{lemma}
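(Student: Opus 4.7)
My plan is to prove the bound by induction on $|V(S)|$, combining \cref{p:proj} with the natural recursive decomposition of $\Lambda'$ along the root of $S$.

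In the base case $|V(S)|=1$, the caterpillar $G$ is a star centered at the sole spine node $r = s_k$, we have $\Lambda'(S,\sigma) = \lambda'(S,r,\sigma) = m_k$, and the bound holds because any two legs of $r$ are non-adjacent in $G$, so no single rotation can change two leg parents simultaneously, and each of the $m_k$ legs must undergo at least one rotation in order to change its parent between $A(S,\pi)$ and $B(S)$.

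For the inductive step, let $r = s_k$ be the root of $S$ with children $v, w$ in $S$ (one possibly absent). Partition $V(G)$ into three sets that induce connected subgraphs of $G$: $P_v = V(S_v) \cup L_v$, $P_w = V(S_w) \cup L_w$, and $P_r = \{r\} \cup L_r$, where $L_v, L_w, L_r$ denote the legs attached to the spine nodes in $V(S_v), V(S_w), \{r\}$, respectively. Classify the rotations in a given sequence from $A(S,\pi)$ to $B(S)$ as type (L) if both endpoints lie in $P_v$, type (R) if both lie in $P_w$, and type (O) otherwise. By \cref{p:proj}, only type (L) rotations affect $T[P_v]$; combined with the identities $A(S,\pi)[P_v] = A(S_v, \pi|_{L_v})$ and $B(S)[P_v] = B(S_v)$, the induction hypothesis applied inside $G[P_v]$ yields $\#(\mathrm{L}) \ge \tfrac{1}{2}\Lambda'(S_v, \sigma|_{V(S_v)})$, and analogously for type (R). In view of the additive identity $\Lambda'(S,\sigma) = \Lambda'(S_v, \sigma|_{V(S_v)}) + \Lambda'(S_w, \sigma|_{V(S_w)}) + \lambda'(S,r,\sigma)$, the induction will close provided that $\#(\mathrm{O}) \ge \tfrac{1}{2}\lambda'(S,r,\sigma)$.

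This last estimate is the crux. A first contribution of $m_k$ is free: projecting onto the star $P_r$ sends $A(S,\pi)[P_r]$ (a chain of $L_r$-legs above $r$) to $B(S)[P_r]$ ($r$ as root with the legs of $L_r$ bound), and the base case forces at least $m_k$ rotations on this projection, all of type (O) since both endpoints lie in $P_r$. To recover the missing $\tfrac{1}{2}\lambda(S,r,\sigma)$, I plan to charge each alternation among $L_v, L_r, L_w$ in $\pi$ to a type-(O) rotation, following the pattern of \cite[Lemma~8]{CardinalEtAl2018b}: two consecutive legs in $\pi$ from different groups are adjacent in the chain of $A(S,\pi)$ but must end in disjoint sides of $r$ in $B(S)$, so some rotation touching $r$, touching a leg of $r$, or bridging $P_v$ and $P_w$ has to split them apart. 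The main obstacle will be to set up this charging so that each type-(O) rotation is blamed for at most a constant number of alternations, which is exactly the slack that the factor $\tfrac{1}{2}$ permits.
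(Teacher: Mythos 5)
Your decomposition is exactly the one the paper uses: the same three-way partition of $V(G)$ around the root of $S$, the same appeal to \cref{p:proj} to isolate the rotations internal to each part, the same inductive identity $\Lambda'(S,\sigma) = \Lambda'(S_v,\cdot) + \Lambda'(S_w,\cdot) + \lambda'(S,r,\sigma)$, and the same citation of~\cite[Lemma~8]{CardinalEtAl2018b} for inspiration. So the skeleton matches the paper. However, the step you yourself flag as ``the main obstacle'' is precisely where your sketch stops and the paper's proof does its real work, so this is a genuine gap rather than a finished alternative.

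The paper does \emph{not} use a charging scheme for the cross-rotations. Instead it defines a potential: for a search tree $T^*$ on $G$, let $\alt(T^*)$ be the maximum, over all root-to-node paths in $T^*$, of the number of edges whose endpoints lie in different parts of $\{D,E,F\}$ (your $P_r, P_v, P_w$). One checks that $\alt(A(S,\pi)) \ge \lambda(S,r,\sigma) + 1$ (the leg chain at the top already alternates $\lambda(S,r,\sigma)$ times, plus one step down from $r$ into a nonempty subtree), that $\alt(B(S)) = 1$, and that a single rotation can change $\alt$ only if its two endpoints lie in different parts, in which case $|\Delta\alt| \le 2$. This immediately yields $\ge \frac12\lambda(S,r,\sigma)$ cross-rotations, with no need to worry about how many alternations a single rotation may ``separate'' or about double-counting against the $m_k$ rotations internal to $P_r$ — the four categories (internal to $D$, to $E$, to $F$, and cross) are disjoint, so the four lower bounds simply add. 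If you try to carry out the charging instead, you will have to deal with rotations that collapse a long alternating prefix in one move; the potential argument sidesteps this entirely, and I would recommend switching to it.

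One smaller point: in the base case you assert that ``no single rotation can change two leg parents simultaneously,'' which is false — rotating two adjacent legs in the chain above $s_k$ changes both of their parents. The conclusion ($\ge m_k$ rotations) is nonetheless correct, but the clean justification is again a potential: the number of free leg nodes (equivalently, legs strictly above $s_k$) drops from $m_k$ to $0$, and any single rotation in the star changes this count by at most one.
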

	\begin{proof}
		Write $T = A(S, \pi)$, $T' = B(S)$, $\sigma = \sigma(\pi)$, and let $r$ be the root of $S$. Let the set $D$ consist of $r$ and its adjacent legs, i.e., if $r = s_i$, then $D = \{s_i\} \cup \{\ell_{i,j} \mid j \in [m_i]\}$. Suppose $r$ has two children $u$ and $v$. Then $G \setminus D$ has two connected components, one consisting of the spine nodes $V(S_u)$ and all adjacent legs, and the other consisting of $V(S_v)$ and all adjacent legs. Call the former $E$ and latter $F$. If $r$ has only one child $u$, let $E$ consist of $V(S_u)$ and all adjacent legs, and let $F = \emptyset$. If $r$ has no children, let $E = F = \emptyset$. Note that $D,E,F$ form a partition of $V(G)$.
		
		We first consider the rotations within each of the three sets. By \cref{p:proj}, we can simply sum up the number of rotations required to transform $T[D]$, $T[E]$, and $T[F]$ into $T'[D]$, $T'[E]$, $T'[F]$, respectively.
		
		$T[D]$ consists of the spine node $r$ and $m_r$ free leg nodes, and $T'[D]$ consists of $r$ and $m_r$ bound leg nodes. Thus, we need $m_r$ rotations to make all leg nodes bound.
		
		If $E \neq \emptyset$, observe that $T[E] = A(S_u, \pi\restr{E})$ and $T'[E] = B(S_u)$, so we need $\frac{1}{2}\Lambda'(S_u, \sigma\restr{E}) = \frac{1}{2}\Lambda'(S_u, \sigma)$ rotations by induction. If $F \neq \emptyset$, we similarly get a lower bound of $\frac{1}{2}\Lambda'(S_v, \sigma)$.
		
		We now show that there are at least $\frac{1}{2}\lambda(S,r,\sigma)$ other rotations. If $\lambda(S,r,\sigma) = 0$, this is trivially true, so suppose $\lambda(S,r,\sigma) > 0$ and thus $E \neq \emptyset$.
		
		Define the \emph{alternation number} of a path $P$ in a search tree on $G$ as the number of edges $(x,y)$ in $P$ such that $x$ and $y$ are in different parts of the partition $D,E,F$. Define the alternation number $\alt(T^*)$ of a search tree $T^*$ on $G$ as the maximum alternation number among all paths starting at the root in $T^*$. Observe that $\alt(T') = 1$, and that $\alt(T) \ge \lambda(S,r,\sigma)+1$, since the leg nodes in $T$ have $\lambda(S,r,\sigma)$ alternations by definition, and there is one more alternation from $r$ to $E \neq \emptyset$.
		
		We now show how rotations can affect the alternation number. Consider a rotation between the nodes $x$ and $y$, and a node $z$. The path from the root to $z$ before and after the rotation may only differ if it contains $x$ or $y$ (or both), and only in one of the following ways:
		\begin{itemize}
			\itemsep0pt
			\item $x$ is inserted before $y$, or $y$ is inserted before $x$.
			\item $x$ is deleted before $y$, or $y$ is deleted before $x$.
			\item $x$ and $y$ are swapped (and are neighbors).
		\end{itemize}
		
		It is easy to see that if $x,y \in D$, or $x,y \in E$, or $x,y \in F$, then the rotation $(x,y)$ does not affect the alternation number, and otherwise, it can only differ by at most two. This means that we need at least $\frac{1}{2}|\alt(T) - \alt(T')| \ge \frac{1}{2}\lambda(S,r,\sigma)$ rotations not within one of the sets $D$, $E$, or $F$. The total number of rotations is thus at least (setting $\Lambda'(S',\sigma) = 0$ if $S'$ is empty):
		\begin{align*}
			m_r + \frac{1}{2}\Lambda'(S_u, \sigma) + \frac{1}{2}\Lambda'(S_v, \sigma) + \frac{1}{2}\lambda(S,r,\sigma) \ge \frac{1}{2}\Lambda'(S, \sigma).\tag*{\qedhere}
		\end{align*}
	\end{proof}
	
	\Cref{p:wilber-static-opt,p:wilber-stg} together imply that $\delta(\fA(G)) \ge \frac{1}{4} H(G) \cdot m$. As mentioned in the introduction, Manneville and Pilaud~\cite{MannevillePilaud2015} proved that $\delta(\fA(G)) \in\Omega(m+n)$. This concludes the proof of the lower bound.
	
	\section{Conclusion}
	
	In this paper, we determined the diameter of each caterpillar associahedron up to a constant, revealing a surprising connection to searching in static and dynamic binary search trees. In particular, transforming $A(S, \pi)$ into $B(S)$ via rotations can be seen as a generalization of serving the access sequence $\sigma(\pi)$ in a dynamic BST. The number of rotations required is between Wilber's first lower bound $\Lambda(S, \sigma(\pi))$ and $\OPT(S, \sigma(\pi))$, begging the question whether other lower bounds for $\OPT$ hold in our generalized model, or whether it perhaps matches $\Lambda$ or $\OPT$. Results in this direction could give new insight into the dynamic BST model.
	
	\paragraph{Acknowledgment.} The author would like to thank L\'aszl\'o Kozma for helpful discussions and suggestions.
	
	\bibliography{info}
	\bibliographystyle{halphashort}
\end{document}